\documentclass{fundam}

\usepackage{amsmath,amssymb}
\usepackage{hyperref}
\usepackage{mathabx}
\usepackage{stmaryrd}
\usepackage{txfonts}
\usepackage{orcidlink}
\usepackage[all]{xy}
\usepackage{kbordermatrix}

\newcommand{\kat}[1]{{\mathcal #1}}

\newcommand{\ideals}[2]{{\rm Id}(#1,#2)}
\newcommand{\unit}{\textsf{\bf 1}}

\newcommand{\trans}[1]{{#1}^{\smallsmile}}

\newcommand{\rres}[2]{#1/#2}

\newcommand{\id}{{\mathbb I}}

\newcommand{\up}[1]{{#1}^{\scriptstyle\uparrow}}
\newcommand{\down}[1]{{#1}^{\scriptstyle\downarrow}}
\newcommand{\downset}[1]{{\downarrow}#1}

\renewcommand{\kbldelim}{(}% Left delimiter
\renewcommand{\kbrdelim}{)}% Right delimiter

\allowdisplaybreaks

\begin{document}

\title{Arrow Operations in Categories of Lattice-valued Relations}

\author{Fatemeh Jowkar \orcidlink{0009-0002-3515-8708}\\
		Department of Mathematics,
		Yazd University,
		Yazd, Iran\\
		jokar.fatemeh1370@gmail.com
	\and
           Michael Winter \orcidlink{0000-0003-0847-0448}\\
           Department of Computer Science,
           Brock University, St.\ Catharines, ON, Canada \\
           mwinter@brocku.ca
          }
          
\runninghead{F. Jowkar, M. Winter}{Arrow Operations in Categories of Lattice-valued Relations}
          
\maketitle

\begin{abstract}
Arrow allegories provide a convenient abstract framework to work with lattice-valued relations, or more precisely, relations that use the elements of a given Heyting algebra as truth values. One characteristic of arrow allegories is that all relations 
of the given arrow allegory use the same Heyting algebra ${\mathcal H}$. In this paper we want to extend this approach to allegories where relations between different objects may use different lattices of truth values and 
even further to relations that use a different lattice of truth values for every pair in the relation. Therefore, we define three concrete allegories, $\mathrm{Rel}({\mathcal H})$, $\mathrm{Rel}^u({\mathcal H})$ and ${\mathcal H}{\rm-Rel}$,
where the allegory listed later is a full suballegory of the previous ones. These three allegories capture the three different situations mentioned above. In particular, ${\mathcal H}{\rm-Rel}$ is the standard example of an arrow category. 
We investigate these allegories and provide suitable categorical definitions for these structures.
\end{abstract}

\section{Introduction}

The concept of fuzzy sets \cite{Zadeh1965}, and more generally lattice valued fuzzy sets \cite{Goguen1967}, extends sets and, therefore, also relations to a many-valued context.
These sets and relations are used in a wide range of mathematical theories and applications including fuzzy controllers \cite{Mamdani1987}, linguistics \cite{deCock2000}, decision-making \cite{Kuzmin1982},
clustering \cite{Bezdek1978} and approximations and near sets \cite{JowkarLattices,JowkarNearFuzzy}. There are several algebraic theories covering different aspects of these sets and relations. 
For example, MV-algebras \cite{Cignoli1999,Hajek1998} are the algebraic semantics of \L{}ukasiewicz logic, i.e., the many-valued logic of \L{}ukasiewicz.

Relational methods, and allegories \cite{Freyd1990} in particular, provide a natural bridge between logic, order, and category theory.
In the classical setting, relations between sets form an allegory, and many logical operations can be expressed in relational terms.
Without referring to a specific logic, arrow allegories \cite{Winter2001,Winter2007} were introduced to capture the relational properties of many-valued relations. In this approach it is 
assumed that all relations of a given arrow allegory use the same Heyting algebra as truth values. In this paper we want to extend this approach to relations potentially using different lattices of truth values.
The first generalization allows relations between different objects to use different Heyting algebras. In a second step, this can be generalized to relations that use different Heyting algebras for each pair
of elements. We will define three allegories capturing these situations. Starting with the arrow allegory ${\mathcal H}{\rm-Rel}$ as the standard example of an allegory where all relations use ${\mathcal H}$
as truth values, we introduce $\mathrm{Rel}^u({\mathcal H})$ as the standard example of an allegory where relations between different objects use different Heyting algebras induced by $\mathcal H$, and,
finally, $\mathrm{Rel}({\mathcal H})$ as the standard example of an allegory where relations use different Heyting algebras induced by $\mathcal H$ for each pair of elements. It turns out that
${\mathcal H}{\rm-Rel}$ is a full suballegory of $\mathrm{Rel}^u({\mathcal H})$ which itself is a full suballegory of $\mathrm{Rel}({\mathcal H})$. We investigate these allegories and their properties
in order to provide suitable categorical definitions for these structures.

The paper is organized as follows.
In Section~2 we recall the necessary background on complete Heyting algebras and Heyting allegories. Furthermore, we introduce ideal relation as an abstract counterpart of the elements in ${\mathcal H}$
and show some basic properties thereof.
In Section~3 we first define the category $\mathrm{Rel}({\mathcal H})$ and describe its basic operations. Furthermore, we provide several counterexamples showing that the stated properties in $\mathrm{Rel}({\mathcal H})$
cannot be generalized even further. Based on these results we also introduce the two full suballegories $\mathrm{Rel}^u({\mathcal H})$ and ${\mathcal H}{\rm-Rel}$.
In Section~4 we provide suitable categorical definitions for all three allegories and show their basic properties.
Last but not least, in Section~5 we conclude and outline some future work.

\section{Preliminaries}

In this section we recall some basic properties of Heyting algebras and allegories.

A complete Heyting algebra ${\mathcal H}=(H,\wedge,\vee,\to,0,1)$ is a complete bounded lattice $(H,\wedge,\vee,0,1)$ so that
$$ z\leq x\to y\mbox{ iff }z\wedge x\leq y. $$
$x\to y$ is called the relative pseudo-complement of $x$ with respect to $y$
Note that every complete Heyting algebra is a distributive lattice, and that the first infinite distributive law
$$ x\wedge\bigvee_{i\in I}y_i=\bigvee_{i\in I}(x\wedge y_i) $$
is valid. For $x\in H$, we denote by $\downset{x}=\{ y \in H\mid y\leq x\}$ the down set 
of $x$. This is also known as the principal ideal generated by $x$.

\begin{lemma}
If ${\mathcal H}$ is a complete Heyting algebra and $z\in H$, then for an element $z\in H$ we define the following operation on $H$ by
$$ x\to_z y := (x\to y)\wedge z. $$
The complete sublattice $\downset{z}$ of ${\mathcal H}$ together with $x\to_z y$ is a Heyting algebra. We call the Heyting algebra 
$\downset{z}$ the subalgebra of ${\mathcal H}$ relative to $z$.
\end{lemma}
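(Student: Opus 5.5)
We show three things: that $\downset{z}$ is a complete sublattice of ${\mathcal H}$, that $\to_z$ maps $\downset{z}\times\downset{z}$ into $\downset{z}$, and that $\to_z$ satisfies the residuation law inside $\downset{z}$.

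\emph{Step 1: $\downset{z}$ is a complete sublattice with its own bounds.} Let $S\subseteq\downset{z}$. Every element of $S$ lies below $z$, so $\bigvee S\leq z$. For $S$ nonempty we also have $\bigwedge S\leq s\leq z$. Hence joins and nonempty meets computed in ${\mathcal H}$ stay in $\downset{z}$. The empty join is $0\in\downset{z}$, which is the bottom element. The empty meet must be taken inside $\downset{z}$, and it is $z$ rather than $1$. So $\downset{z}$ is a complete bounded lattice with bottom $0$ and top $z$. Its finite meets and joins agree with those of ${\mathcal H}$, so in particular distributivity is inherited.

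\emph{Step 2: closure of $\to_z$.} This is immediate, since $x\to_z y=(x\to y)\wedge z\leq z$.

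\emph{Step 3: residuation.} Let $x,y,w\in\downset{z}$. We show $w\leq x\to_z y$ iff $w\wedge x\leq y$.
\begin{itemize}
\item[$(\Rightarrow)$] If $w\leq (x\to y)\wedge z$, then $w\leq x\to y$, so $w\wedge x\leq y$ by the Heyting property of ${\mathcal H}$.
\item[$(\Leftarrow)$] If $w\wedge x\leq y$, then $w\leq x\to y$ by the Heyting property of ${\mathcal H}$. Since $w\in\downset{z}$, also $w\leq z$. Therefore $w\leq (x\to y)\wedge z=x\to_z y$.
\end{itemize}
The meet $w\wedge x$ is the same in $\downset{z}$ as in ${\mathcal H}$ by Step 1, so this is exactly the residuation law of $\downset{z}$.

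Hence $(\downset{z},\wedge,\vee,\to_z,0,z)$ is a complete Heyting algebra.

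The only point that needs care is Step 1: the top of the subalgebra is $z$, not $1$. This is why the implication is cut down by $\wedge z$, since $x\to y$ itself may lie outside $\downset{z}$ (for example, $x\to x=1$). Otherwise every step reduces directly to the defining adjunction of ${\mathcal H}$.
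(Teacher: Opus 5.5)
Your proof is correct and follows essentially the same route as the paper: you show closure of $\downset{z}$ under joins and meets with top $z$, the bound $x\to_z y\leq z$, and the residuation equivalence, using $w\leq z$ to absorb the $\wedge z$. Your explicit remark that the empty meet in $\downset{z}$ is $z$ rather than $1$ makes precise what the paper's phrase ``closed under arbitrary unions and meets'' glosses over.
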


\begin{proof}
First of all $\downset{z}$ is closed under arbitrary unions and meets and contains $0$, i.e., $\downset{z}$ is a complete sublattice of ${\mathcal H}$ with greatest element $z$. 
Furthermore, $x\to_z y\leq z$, and, hence, $x\to_z y\in\downset{z}$. Now suppose $u\in\downset{z}$. Then we have
\begin{align*}
  u\leq x\to_z y
  &\iff u\leq (x\to y)\wedge z\\
  &\iff u\leq x\to y\mbox{ and }u\leq z\\  
  &\iff u\leq x\to y && \mbox{$u\in\downset{z}$}\\
  &\iff u\wedge x\leq y,
\end{align*}
i.e., $x\to_z y$ is the relative pseudo-complement of $x$ with respect to $y$ in $\downset{z}$.
\end{proof}

Please note that the operation $x\to_z y$ is defined for all elements of $H$. The previous lemma just shows that its restriction to $\downset{z}$ is a relative pseudo-complement in $\downset{z}$.

Arrow operations on truth values, i.e., on elements of a Heyting algebra ${\mathcal H}$ map elements of $H$ to the smallest and greatest element of the sublattice relative to a given
element $z$. Therefore, we define two operations $\rule{0pt}{6pt}^{\uparrow_x},\rule{0pt}{6pt}^{\downarrow_x}$ for an element $x\in H$ on $H$ by
\begin{align*}
	u^{\uparrow_x}
	&:= \begin{cases}x & \text{if }u\neq 0,\\ 0 & \text{otherwise},\end{cases}\\
	u^{\downarrow_x}
	&:= \begin{cases}x &\text{if }x\leq u,\\ 0 & \text{otherwise}.\end{cases}
\end{align*}
Please note that the operations $\rule{0pt}{6pt}^{\uparrow_x},\rule{0pt}{6pt}^{\downarrow_x}$ are again defined for all elements of $H$. Later in the definition
of the arrow operations in $\mathrm{Rel}({\mathcal H})$ both operations will be applied to elements that are in $\downset{x}$ only. This more general definition is
used because of the second statement in the following lemma.

\begin{lemma}\label{Lem:ArrowComponents}
The following properties are valid:
\begin{enumerate}
	\item $(\bigvee\limits_{i\in I}u_i)^{\uparrow_x}=\bigvee\limits_{i\in I}u_i^{\uparrow_x}$ for all $u_i\in\downset{x}$.
	\item $u^{\uparrow_x}\wedge y=u^{\uparrow_{x\wedge y}}$ for all $u,x,y\in H$.
	\item $(u\wedge v^{\downarrow_y})^{\uparrow_x}=u^{\uparrow_x}\wedge v^{\downarrow_y}$ for all $u\in\downset{x}$ and $v\in\downset{y}$ with $x\leq y$.
\end{enumerate}
\end{lemma}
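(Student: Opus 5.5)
All three statements follow directly from the case definitions of $u^{\uparrow_x}$ and $u^{\downarrow_x}$. For each one I would split into the cases on which those definitions depend and compare both sides.

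For (1), I split on whether all $u_i$ are $0$. If every $u_i=0$ (including $I=\emptyset$), then $\bigvee_{i\in I}u_i=0$. Both sides are then $0$, the right one as a join of zeros or an empty join. Otherwise some $u_j\neq 0$, so $\bigvee_{i\in I}u_i\neq 0$ and the left side is $x$. On the right, every summand is either $0$ or $x$ and at least one equals $x$, so the join is $x$. The hypothesis $u_i\in\downset{x}$ is not needed here; I would only remark on this.

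For (2), I use the same case split on $u$. If $u=0$, both sides are $0$, since $0\wedge y=0$ and $0^{\uparrow_{x\wedge y}}=0$. If $u\neq 0$, the left side is $x\wedge y$, and the right side is $x\wedge y$ by definition. The point is that $\uparrow$ only tests whether $u$ is zero, so the target element can be meet-ed freely.

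For (3), I split on whether $y\le v$.
\begin{itemize}
\item If $y\not\le v$, then $v^{\downarrow_y}=0$. The left side becomes $(u\wedge 0)^{\uparrow_x}=0^{\uparrow_x}=0$, and the right side is $u^{\uparrow_x}\wedge 0=0$.
\item If $y\le v$, then $v^{\downarrow_y}=y$. Since $u\le x\le y$, we get $u\wedge y=u$, so the left side is $u^{\uparrow_x}$. The right side is $u^{\uparrow_x}\wedge y$. This equals $u^{\uparrow_x}$ because $u^{\uparrow_x}\in\{0,x\}$ and $x\le y$.
\end{itemize}
The one thing to watch is that this second case really uses both hypotheses, $u\in\downset{x}$ and $x\le y$, to absorb the meet with $y$. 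I would flag it as the only non-trivial step. The assumption $v\in\downset{y}$ is not actually needed, since $y\le v$ together with $v\le y$ just forces $v=y$ and nothing breaks otherwise.
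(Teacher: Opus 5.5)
Your proposal is correct and follows essentially the same case analysis as the paper in all three parts. The only variation is in (3): the paper splits on ``$u=0$ or $v^{\downarrow_y}=0$'' versus both nonzero, whereas you split only on whether $y\le v$ and handle $u=0$ through $u^{\uparrow_x}\in\{0,x\}$; your side remarks that $u_i\in\downset{x}$ in (1) and $v\le y$ in (3) are never used are also accurate.
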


\begin{proof}
\begin{enumerate}
	\item First, assume $\bigvee\limits_{i\in I}u_i=0$. Then $(\bigvee\limits_{i\in I}u_i)^{\uparrow_x}=0$ and we have $u_i=0$ for all $i\in I$. This implies
		$u_i^{\uparrow_x}=0$, and, hence, $\bigvee\limits_{i\in I}u_i^{\uparrow_x}=0$. Now, assume $\bigvee\limits_{i\in I}u_i\ne 0$. Then
		$(\bigvee\limits_{i\in I}u_i)^{\uparrow_x}=x$ and there exists an $i\in I$ so that $u_i\ne 0$. This implies $u_i^{\uparrow_x}=x$, and, hence, $\bigvee\limits_{i\in I}u_i^{\uparrow_x}=x$.
	\item If $u=0$, then both sides of the equation equal to $0$. If $u\ne 0$, then we have $u^{\uparrow_x}\wedge y=x\wedge y=u^{\uparrow_{x\wedge y}}$.
	\item If $u=0$ or $v^{\downarrow_y}=0$, then both sides of the equation equal to $0$. Now assume $u\ne 0$ and $v^{\downarrow_y}\ne 0$. Then $v^{\downarrow_y}=y$ since $v\leq y$, and 
	      we have $u\wedge y=u\ne 0$ since $u\leq x\leq y$ and $u\ne 0$. Now, we compute
		\begin{align*}
			(u\wedge v^{\downarrow_y})^{\uparrow_x}
			&= (u\wedge y)^{\uparrow_x} && \mbox{$v^{\downarrow_y}=y$}\\
			&= u^{\uparrow_x} && \mbox{$u\wedge y=u$}\\
			&= x && \mbox{$u\ne 0$}\\
			&= x\wedge y && \mbox{$x\leq y$}\\
			&= u^{\uparrow_x}\wedge y && \mbox{$u\ne 0$}\\
			&= u^{\uparrow_x}\wedge v^{\downarrow_y}. && \mbox{$v^{\downarrow_y}=y$}
		\end{align*}
\end{enumerate}
\end{proof}

\subsection{Heyting Allegories}

We denote the composition in a category by $;$. Please note that this operation has to be read from left to right, i.e., $f;g$ for two morphisms $f:A\to B$ and $g:B\to C$ means first $f$ and then $g$. We denote the
identity morphism on an object $A$ by $\id_A$. Heyting allegories provide a suitable categorical structure for relations.

\begin{definition}\label{Def:HeytingAllegory}
A Heyting allegory $\kat{R}$ is a category satisfying the following:
\begin{enumerate}
\item For all objects $A$ and $B$ the collection $\kat{R}[A,B]$ is a
      Heyting algebra. Meet, join, the induced ordering, relative pseudo-complement and the least and
      the greatest element are denoted by $\sqcap,\sqcup,\to,\sqsubseteq,\Bot_{AB},\Top_{AB}$, respectively.
\item There is a monotone operation $\trans{\rule{0pt}{6pt}}$
(called converse)
      mapping a relation $Q:A\to B$ to $\trans{Q}:B\to A$
      such that for all relations $Q:A\to B$ and $R:B\to C$ the following
      holds:
      $ \trans{(Q;R)}=\trans{R};\trans{Q}$ and
      $\trans{(\trans{Q})}=Q$.
\item For all relations $Q:A\to B, R:B\to C$ and $S:A\to C$ the
modular law
      $(Q;R)\sqcap S\sqsubseteq Q;(R\sqcap(\trans{Q};S))$
      holds.
\item For all relations $R:B\to C$ and $S:A\to C$ there is a
relation $S/R:A\to B$
      (called the left residual of $S$ and $R$) such that for all $X:A\to B$ the
      following holds:
      $X;R\sqsubseteq S\iff X\sqsubseteq \rres{S}{R}.$
\end{enumerate}
\end{definition}

In the following we assume that composition binds tighter than the lattice theoretic operations so that the modular law becomes
$Q;R\sqcap S\sqsubseteq Q;(R\sqcap\trans{Q};S)$.

The following lemma lists a few basic properties of Heyting allegories that we will need throughout the paper. A proof can be found 
in \cite{Winter2007}.

\begin{lemma}\label{Lem:Basics}
Let $\kat{R}$ be a Heyting allegory. Then we have:
\begin{enumerate}
	\item $\Top_{AA};\Top_{AB}=\Top_{AB};\Top_{BB}=\Top_{AB}$ for all objects $A,B$.
	\item $(Q\sqcap R;\Top_{BC});S=Q;S\sqcap R;\Top_{BD}$ for all $Q:A\to C, R:A\to B$ and $S:C\to D$.
	\item $Q;\Top_{BD}\sqcap \Top_{AC};R = Q;\Top_{BC};R$ for all $Q:A\to B$ and $R:C\to D$.
	\item $Q\sqsubseteq Q;\trans{Q};Q$ for all $Q:A\to B$.
\end{enumerate}
\end{lemma}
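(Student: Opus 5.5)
We prove the four properties in the order listed, deriving each from the modular law, the properties of converse, and the existence of residuals. The residuals give that composition distributes over arbitrary joins; in particular $Q;\Bot=\Bot$ and composition is monotone.

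For (4), apply the modular law with $R:=\id_B$ and $S:=Q$. This gives
\[ Q = Q;\id_B\sqcap Q\sqsubseteq Q;(\id_B\sqcap \trans{Q};Q)\sqsubseteq Q;\trans{Q};Q. \]

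For (1), the inclusion $\Top_{AA};\Top_{AB}\sqsubseteq\Top_{AB}$ is trivial. For the converse inclusion, note that $\Top_{AB}=\id_A;\Top_{AB}$ and $\id_A\sqsubseteq\Top_{AA}$, so monotonicity gives $\Top_{AB}\sqsubseteq\Top_{AA};\Top_{AB}$. The equation $\Top_{AB};\Top_{BB}=\Top_{AB}$ is symmetric. Alternatively, apply (4) to $\Top_{AB}$ to get $\Top_{AB}\sqsubseteq\Top_{AB};\Top_{BA};\Top_{AB}$.

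For (3), the inclusion $Q;\Top_{BC};R\sqsubseteq Q;\Top_{BD}\sqcap\Top_{AC};R$ follows from monotonicity and (1). For the other inclusion, apply the modular law:
\[ Q;\Top_{BD}\sqcap\Top_{AC};R\sqsubseteq Q;(\Top_{BD}\sqcap\trans{Q};\Top_{AC};R)\sqsubseteq Q;\Top_{BA};\Top_{AC};R\sqsubseteq Q;\Top_{BC};R, \]
where the last step uses $\Top_{BA};\Top_{AC}\sqsubseteq\Top_{BC}$.

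For (2), we show both inclusions.
\begin{itemize}
\item For $\sqsubseteq$: monotonicity gives $(Q\sqcap R;\Top_{BC});S\sqsubseteq Q;S$ and $(Q\sqcap R;\Top_{BC});S\sqsubseteq R;\Top_{BC};S\sqsubseteq R;\Top_{BD}$.
\item For $\sqsupseteq$: use the modular law in its converse-symmetric form, $Q;S\sqcap T\sqsubseteq (Q\sqcap T;\trans{S});S$. This form is obtained by applying converse to the original modular law. Taking $T=R;\Top_{BD}$ gives
\[ Q;S\sqcap R;\Top_{BD}\sqsubseteq (Q\sqcap R;\Top_{BD};\trans{S});S\sqsubseteq (Q\sqcap R;\Top_{BC});S. \]
\end{itemize}

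The main obstacle is deriving that converse-symmetric (right-hand) version of the modular law. It is obtained by taking converses of both sides of the modular law applied to $\trans{S}$, $\trans{Q}$ and $\trans{T}$, using that converse is monotone and involutive and hence preserves meets.
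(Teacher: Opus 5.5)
Your proof is correct. The paper gives no argument for this lemma; it only refers to earlier work on arrow categories, so there is no in-paper proof to compare with.

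Your derivation is the standard self-contained one:
\begin{enumerate}
\item[(4)] This is a direct instance of the modular law.
\item[(1)] This follows from $\id_A\sqsubseteq\Top_{AA}$ and monotonicity of composition.
\item[(3)] The nontrivial inclusion is again a direct instance of the modular law.
\item[(2)] The nontrivial inclusion comes from the converse-dual form $Q;S\sqcap T\sqsubseteq(Q\sqcap T;\trans{S});S$ with $T=R;\Top_{BD}$, followed by $\Top_{BD};\trans{S}\sqsubseteq\Top_{BC}$.
\end{enumerate}

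One point deserves to be made explicit. The paper's definition of a Heyting allegory postulates only left residuals. These give monotonicity and join-preservation of $X\mapsto X;R$ directly. Monotonicity of $X\mapsto Q;X$ needs one extra step through converse, which is available because converse is monotone and involutive. That makes converse an order isomorphism, and this same fact justifies that converse preserves meets, which you rely on when dualizing the modular law.
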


An important property of relations is totality. A relation $Q:A\to B$ is called total iff $\id_A\sqsubseteq Q;\trans{Q}$. Please note that 
total relations satisfy $Q;\Top_{BC}=\Top_{AC}$ for all objects $C$.

An abstract version of a singleton set as an object is given by the notion of a unit.

\begin{definition}
An object $\unit$ of a Heyting allegory is called a unit iff $\Top_{\unit\unit}=\id_\unit$ and $\Top_{A\unit}$ is total for every object $A$.
\end{definition}

\begin{lemma}\label{Lem:TopUnit}
Let $\kat{R}$ be a Heyting allegory with a unit. Then we have $\Top_{AB}=\Top_{A\unit};\Top_{\unit B}$.
\end{lemma}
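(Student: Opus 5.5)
We prove $\Top_{AB}=\Top_{A\unit};\Top_{\unit B}$ by showing the two inclusions separately. The inclusion $\Top_{A\unit};\Top_{\unit B}\sqsubseteq\Top_{AB}$ is immediate, since $\Top_{AB}$ is the greatest element of the Heyting algebra $\kat{R}[A,B]$. All the work is in the reverse inclusion $\Top_{AB}\sqsubseteq\Top_{A\unit};\Top_{\unit B}$.

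For this we use the totality of $\Top_{A\unit}$, which is part of the definition of a unit. As remarked after the definition of totality, a total relation $Q:A\to\unit$ satisfies $Q;\Top_{\unit C}=\Top_{AC}$ for every object $C$. Taking $Q=\Top_{A\unit}$ and $C=B$ gives directly $\Top_{A\unit};\Top_{\unit B}=\Top_{AB}$, which is the claim.

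Should one prefer not to rely on that remark, we would justify it as follows. From $\id_A\sqsubseteq Q;\trans{Q}$ and monotonicity of composition we get
\begin{align*}
\Top_{AC}=\id_A;\Top_{AC}\sqsubseteq Q;\trans{Q};\Top_{AC}\sqsubseteq Q;\Top_{\unit C},
\end{align*}
where the last step uses $\trans{Q};\Top_{AC}\sqsubseteq\Top_{\unit C}$, again because $\Top_{\unit C}$ is the greatest element of $\kat{R}[\unit,C]$. The reverse inclusion $Q;\Top_{\unit C}\sqsubseteq\Top_{AC}$ holds trivially. Monotonicity of composition follows from the distributivity of composition over joins, which in turn follows from the existence of residuals: $X\mapsto X;R$ is a left adjoint.

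There is no real obstacle. The only point requiring care is that the property $\Top_{\unit\unit}=\id_\unit$ is not needed here; only totality is used.
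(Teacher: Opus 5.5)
Your proof is correct and is the paper's argument: the paper proves the lemma in one line from the totality of $\Top_{A\unit}$ and the earlier remark that a total $Q:A\to B$ satisfies $Q;\Top_{BC}=\Top_{AC}$, which is exactly your main step. Your optional derivation of that remark is also sound, with one small addition: the step $Q;\trans{Q};\Top_{AC}\sqsubseteq Q;\Top_{\unit C}$ needs monotonicity of composition in the \emph{right} argument, which follows from left-monotonicity (given by the residual) via the monotone, involutive converse, not directly from the residual.
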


\begin{proof}
This follows immediately from the totality of $\Top_{A\unit}$.
\end{proof}

An important property of Heyting allegories is uniformity. 

\begin{definition}\label{Def:uniform}
A Heyting allegory is called uniform iff $\Top_{AB};\Top_{BC}=\Top_{AC}$ for all objects $A,B$ and $C$.
\end{definition}

The following lemma provides an alternative definition of uniformity by quantifying over one resp.\ two objects only.

\begin{lemma}\label{Lem:uniform}
Let $\kat{R}$ be a Heyting allegory with a unit. Then the following statements are equivalent:
\begin{enumerate}
	\item $\kat{R}$ is uniform.
	\item $\Top_{AB};\Top_{BA}=\Top_{AA}$ for all objects $A$ and $B$.
	\item $\Top_{\unit A};\Top_{A\unit}=\Top_{\unit\unit}$ for all objects $A$.
\end{enumerate}
\end{lemma}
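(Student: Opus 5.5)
We establish the cycle $1\Rightarrow 2\Rightarrow 3\Rightarrow 1$.

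The implication $1\Rightarrow 2$ is immediate, since statement 2 is the instance $C=A$ of Definition~\ref{Def:uniform}.

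For $2\Rightarrow 3$, instantiate statement 2 with the object $\unit$ in place of $A$ and $A$ in place of $B$. This gives $\Top_{\unit A};\Top_{A\unit}=\Top_{\unit\unit}$, which is exactly statement 3.

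The implication $3\Rightarrow 1$ is the only one with content. Let objects $A,B,C$ be given. By Lemma~\ref{Lem:TopUnit} applied twice, we can write $\Top_{AB}=\Top_{A\unit};\Top_{\unit B}$ and $\Top_{BC}=\Top_{B\unit};\Top_{\unit C}$. Hence
\begin{align*}
\Top_{AB};\Top_{BC}
&= \Top_{A\unit};(\Top_{\unit B};\Top_{B\unit});\Top_{\unit C}\\
&= \Top_{A\unit};\Top_{\unit\unit};\Top_{\unit C} && \mbox{statement 3 with $B$}\\
&= \Top_{A\unit};\Top_{\unit C} && \mbox{Lemma~\ref{Lem:Basics}(1)}\\
&= \Top_{AC}. && \mbox{Lemma~\ref{Lem:TopUnit}}
\end{align*}
In the third step one may equally use $\Top_{\unit\unit}=\id_\unit$ from the definition of a unit. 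This proves uniformity.

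The main point needing care is that statement 3 is only assumed for the middle object $B$, and it is applied to exactly that object. The unit factorization of Lemma~\ref{Lem:TopUnit} holds in any Heyting allegory with a unit, independently of uniformity, because it only uses the totality of $\Top_{A\unit}$. So no circularity arises.
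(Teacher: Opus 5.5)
Your proof is correct and follows essentially the same route as the paper. The first two implications are specializations, and for $3\Rightarrow 1$ both arguments factor the universal relations through $\unit$ via Lemma~\ref{Lem:TopUnit} and apply statement 3 at the middle object $B$; the only difference is that you get $\Top_{AB};\Top_{BC}=\Top_{AC}$ by a direct chain of equations, whereas the paper derives $\Top_{AC}=\Top_{AB};\Top_{BA};\Top_{AC}\sqsubseteq\Top_{AB};\Top_{BC}$ and leaves the trivial reverse inclusion implicit.
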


\begin{proof}
The implications $(1)\Rightarrow(2)$ and $(2)\Rightarrow(3)$ are trivial because the conclusion is each time a special case of the assumption. Now, suppose 
$\Top_{\unit A};\Top_{A\unit}=\Top_{\unit\unit}$ for all objects $A$. Then we have
\begin{align*}
	\Top_{AC}
	&= \Top_{AA};\Top_{AC} && \mbox{Lemma \ref{Lem:Basics}(1)}\\
	&= \Top_{A\unit};\Top_{\unit A};\Top_{AC} && \mbox{Lemma \ref{Lem:TopUnit}}\\	
	&= \Top_{A\unit};\Top_{\unit\unit};\Top_{\unit A};\Top_{AC} && \mbox{Lemma \ref{Lem:Basics}(1)}\\
	&= \Top_{A\unit};\Top_{\unit B};\Top_{B\unit};\Top_{\unit A};\Top_{AC} && \mbox{assumption}\\
	&= \Top_{AB};\Top_{BA};\Top_{AC} && \mbox{Lemma \ref{Lem:TopUnit}}\\
	&\sqsubseteq \Top_{AB};\Top_{BC},
\end{align*}
verifying that $\kat{R}$ is uniform.
\end{proof}

For concrete lattice valued relations, i.e., for relations between two sets $A$ and $B$ given by their characteristic function $Q:A\times B\to H$ an ideal relation
is a constant valued function. Therefore, any concrete ideal relation $Q$ corresponds to the lattice value $Q(a,b)$ for any $a\in A$ and $b\in B$. In other words, the collection
of ideal relations from $A$ to $B$ represents the lattice $L$ of truth values used by the relations from $A$ to $B$.  

\begin{definition}\label{Def:Ideal}
A relation $X:A\to B$ of a Heyting allegory $\kat{R}$ is called an ideal relation (or ideal for short) iff $\Top_{AA};X;\Top_{BB}=X$. We denote
by $\ideals{A}{B}$ the set of all ideals from $A$ to $B$.
\end{definition}

Notice that all relations on the unit, i.e., every relation $X:\unit\to\unit$, are ideals since $\Top_{\unit\unit}=\id_\unit$ and we have $\ideals{\unit}{\unit}=\kat{R}[\unit,\unit]$.
Since $X\sqsubseteq\Top_{AA};X;\Top_{BB}$ for all relations $X:A\to B$ it is sufficient to show that $\Top_{AA};X;\Top_{BB}\sqsubseteq X$ in order to verify that $X$ is an ideal.

\begin{lemma}\label{Lem:IdealProps}
Let $U,V:A\to A, X,Y : A\to B$ and $Z:B\to C$ be ideals. Then the relations $\Bot_{AB}, \Top_{AB}, \trans{X}, X\sqcap Y, X\sqcup Y, X\to Y$ and $X;Z$ are ideals and we
have $U;V=U\sqcap V$.
\end{lemma}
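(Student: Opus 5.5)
We verify each claim through the sufficient criterion noted after Definition~\ref{Def:Ideal}: a relation $W:A\to B$ is an ideal as soon as $\Top_{AA};W;\Top_{BB}\sqsubseteq W$. The tools are Lemma~\ref{Lem:Basics}, monotonicity and distributivity of composition over joins, the modular law, and residuation.

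\emph{Easy cases.}
\begin{itemize}
\item $\Bot_{AB}$: we have $\Top_{AA};\Bot_{AB};\Top_{BB}=\Bot_{AB}$, since composition preserves the empty join (it is a left adjoint via residuals).
\item $\Top_{AB}$: this is immediate from Lemma~\ref{Lem:Basics}(1).
\item $\trans{X}$: since $\trans{\Top_{AA}}=\Top_{AA}$ (converse is a monotone involution, hence an order isomorphism), we get $\Top_{BB};\trans{X};\Top_{AA}=\trans{(\Top_{AA};X;\Top_{BB})}=\trans{X}$.
\item $X\sqcap Y$: by monotonicity, $\Top_{AA};(X\sqcap Y);\Top_{BB}\sqsubseteq \Top_{AA};X;\Top_{BB}\sqcap\Top_{AA};Y;\Top_{BB}=X\sqcap Y$.
\item $X\sqcup Y$: composition distributes over joins on both sides, so the claim follows directly.
\item $X;Z$: we have $\Top_{AA};X;Z;\Top_{CC}=\Top_{AA};X;\Top_{BB};\Top_{BB};Z;\Top_{CC}\sqsupseteq X;Z$. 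More directly, $\Top_{AA};X;Z;\Top_{CC}=(\Top_{AA};X);(Z;\Top_{CC})=X;Z$, because $\Top_{AA};X=X$ and $Z;\Top_{CC}=Z$. These last two identities follow from $X=\Top_{AA};X;\Top_{BB}$ together with Lemma~\ref{Lem:Basics}(1).
\end{itemize}

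\emph{The pseudo-complement $X\to Y$.} This is the case I expect to be the main obstacle, because $\to$ is not monotone in its first argument. The plan is to show $W:=\Top_{AA};(X\to Y);\Top_{BB}\sqsubseteq X\to Y$, which is equivalent to $W\sqcap X\sqsubseteq Y$. The key step is to pull $X$ inside using Lemma~\ref{Lem:Basics}(2) and its converse/left variant. Since $X=X;\Top_{BB}$, Lemma~\ref{Lem:Basics}(2) gives
$$(\Top_{AA};(X\to Y)\sqcap X;\Top_{BB});\Top_{BB}=\Top_{AA};(X\to Y);\Top_{BB}\sqcap X;\Top_{BB},$$
so $W\sqcap X=(\Top_{AA};(X\to Y)\sqcap X);\Top_{BB}$. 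Symmetrically, via converse we have $\Top_{AA};P\sqcap\Top_{AA};X=\Top_{AA};(P\sqcap\Top_{AA};X)$, where we use $X=\Top_{AA};X$. Hence
$$\Top_{AA};(X\to Y)\sqcap X=\Top_{AA};((X\to Y)\sqcap X)\sqsubseteq\Top_{AA};Y.$$
Composing with $\Top_{BB}$ then yields $W\sqcap X\sqsubseteq\Top_{AA};Y;\Top_{BB}=Y$. The variant of Lemma~\ref{Lem:Basics}(2) with $\Top$ on the left, which reads $S;(Q\sqcap\Top;R)=S;Q\sqcap\Top;R$ in the appropriate form, is obtained by taking converses of Lemma~\ref{Lem:Basics}(2).

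\emph{The equation $U;V=U\sqcap V$.} For the inclusion $\sqsubseteq$, we have $U;V\sqsubseteq U;\Top_{AA}=U$ and $U;V\sqsubseteq\Top_{AA};V=V$. For the inclusion $\sqsupseteq$, Lemma~\ref{Lem:Basics}(3) gives
$$U\sqcap V=U;\Top_{AA}\sqcap\Top_{AA};V=U;\Top_{AA};V=U;V,$$
where the last step uses $U;\Top_{AA}=U$. This completes the plan.
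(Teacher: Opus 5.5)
Your proposal is correct and follows essentially the same route as the paper. For $X\to Y$ you use Lemma~\ref{Lem:Basics}(2) and its converse form together with $X=\Top_{AA};X=X;\Top_{BB}$ to reach $\Top_{AA};(X\to Y);\Top_{BB}\sqcap X\sqsubseteq Y$, and for $U;V=U\sqcap V$ you use Lemma~\ref{Lem:Basics}(3) as the paper does. The remaining differences are cosmetic: you spell out the easy cases the paper leaves to the reader, handling $X\sqcap Y$ by plain monotonicity rather than Lemma~\ref{Lem:Basics}(2), and you apply the two one-sided versions of Lemma~\ref{Lem:Basics}(2) in sequence rather than in a single step.
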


\begin{proof}
The cases $\Bot_{AB}, \trans{X}, X\sqcup Y$ and $X;Z$ are straightforward and left to the reader. The case $X\sqcap Y$ follows immediately from Lemma \ref{Lem:Basics}(2).

Now consider the following computation
\begin{align*}
  	\lefteqn{X\to Y\sqsubseteq X\to Y}\\
  	&\iff X\sqcap (X\to Y)\sqsubseteq Y\\
  	&~~\Longrightarrow \Top_{AA};(X\sqcap (X\to Y));\Top_{BB}\sqsubseteq \Top_{AA};Y;\Top_{BB}=Y && \mbox{$Y$ ideal}\\
  	&\iff \Top_{AA};(\Top_{AA};X;\Top_{BB}\sqcap (X\to Y));\Top_{BB}\sqsubseteq Y && \mbox{$X$ ideal}\\  	
  	&\iff \Top_{AA};X;\Top_{BB}\sqcap\Top_{AA};(X\to Y);\Top_{BB}\sqsubseteq Y && \mbox{Lemma \ref{Lem:Basics}(2)}\\  	
  	&\iff X\sqcap\Top_{AA};(X\to Y);\Top_{BB}\sqsubseteq Y && \mbox{$X$ ideal}\\  	
  	&\iff \Top_{AA};(X\to Y);\Top_{BB}\sqsubseteq X\to Y,
\end{align*}
verifying that $X\to Y$ is an ideal. For the remaining property we first show
\begin{align*}
	U;V
	&\sqsubseteq U;\Top_{AA}\\
	&= \Top_{AA};U;\Top_{AA};\Top_{AA} && \mbox{$U$ ideal}\\
	&= \Top_{AA};U;\Top_{AA} && \mbox{Lemma \ref{Lem:Basics}(1)}\\
	&= U. && \mbox{$U$ ideal}
\end{align*}
The inclusion $U;V\sqsubseteq V$ is shown analogously so that we conclude $U;V\sqsubseteq U\sqcap V$.
In addition, we have
\begin{align*}
  	U\sqcap V
  	&= \Top_{AA};U;\Top_{AA}\sqcap\Top_{AA};V;\Top_{AA} && \mbox{$U,V$ ideals}\\
  	&= \Top_{AA};U;\Top_{AA};V;\Top_{AA} && \mbox{Lemma \ref{Lem:Basics}(3)}\\  	
  	&= \Top_{AA};U;\Top_{AA};\Top_{AA};V;\Top_{AA} && \mbox{Lemma \ref{Lem:Basics}(1)}\\
  	&= U;V  && \mbox{$U,V$ ideals}
\end{align*}
verifying the opposite inclusion.
\end{proof}

The following theorem verifies that all lattices of truth values, i.e., the Heyting algebras $\ideals{A}{B}$, in a Heyting allegory are embedded in the Heyting algebra $\ideals{\unit}{\unit}$
as subalgebras relative to a specific element.

\begin{theorem}
The map $\varphi(X):\ideals{A}{B}\to\ideals{\unit}{\unit}$ between the set of ideal relations from $A$ to $B$ and the set of ideal relation on the unit
defined by 
$$ \varphi(X):=\Top_{\unit A};X;\Top_{B\unit} $$
is an isomorphism between the Heyting algebras $\ideals{A}{B}$ and the subalgebra of $\ideals{\unit}{\unit}$ relative to $\varphi(\Top_{AB})$. 
\end{theorem}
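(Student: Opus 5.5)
We need to prove that $\varphi$ maps $\ideals{A}{B}$ into $\downset{\varphi(\Top_{AB})}$, is bijective onto it with inverse $\psi(Y):=\Top_{A\unit};Y;\Top_{\unit B}$, and preserves the Heyting structure. Since an order isomorphism between complete lattices automatically preserves all joins and meets, and the relative pseudo-complement is determined by the order, it suffices to show that $\varphi$ and $\psi$ are mutually inverse monotone maps between $\ideals{A}{B}$ and $\downset{\varphi(\Top_{AB})}\subseteq\ideals{\unit}{\unit}$. The order on $\ideals{A}{B}$ is the inherited one. By the earlier lemma on relative subalgebras, the Heyting implication on $\downset{\varphi(\Top_{AB})}$ is $\to_{\varphi(\Top_{AB})}$; by Lemma \ref{Lem:IdealProps}, $\ideals{A}{B}$ is a Heyting subalgebra of $\kat{R}[A,B]$. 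So the implication on each side is the pseudo-complement of its own ordered set, and order isomorphism suffices.

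\textbf{Well-definedness and monotonicity.} Every relation on $\unit$ is an ideal, so $\varphi(X)\in\ideals{\unit}{\unit}$. Monotonicity of $\varphi$ and $\psi$ follows from monotonicity of composition. From $X\sqsubseteq\Top_{AB}$ we get $\varphi(X)\sqsubseteq\varphi(\Top_{AB})$, so $\varphi$ lands in $\downset{\varphi(\Top_{AB})}$. For $Y:\unit\to\unit$, the relation $\psi(Y)$ is an ideal: $\Top_{AA};\Top_{A\unit}\sqsubseteq\Top_{A\unit}$, and likewise on the right.

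\textbf{$\psi\circ\varphi=\mathrm{id}$.} We compute
$$\psi(\varphi(X))=\Top_{A\unit};\Top_{\unit A};X;\Top_{B\unit};\Top_{\unit B}=\Top_{AA};X;\Top_{BB}=X,$$
using Lemma \ref{Lem:TopUnit} twice and then the fact that $X$ is an ideal.

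\textbf{$\varphi\circ\psi=\mathrm{id}$ on $\downset{\varphi(\Top_{AB})}$.} This is the main obstacle. For $Y\sqsubseteq\varphi(\Top_{AB})$ we have
$$\varphi(\psi(Y))=\Top_{\unit A};\Top_{A\unit};Y;\Top_{\unit B};\Top_{B\unit}=P;Y;Q,$$
where $P:=\Top_{\unit A};\Top_{A\unit}$ and $Q:=\Top_{\unit B};\Top_{B\unit}$ are relations on $\unit$, hence ideals. By Lemma \ref{Lem:IdealProps}, composition of ideals on $\unit$ is meet, so $\varphi(\psi(Y))=P\sqcap Y\sqcap Q$. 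It therefore suffices to show $Y\sqsubseteq P$ and $Y\sqsubseteq Q$. Now
$$\varphi(\Top_{AB})=\Top_{\unit A};\Top_{AB};\Top_{B\unit}=\Top_{\unit A};\Top_{A\unit};\Top_{\unit B};\Top_{B\unit}=P;Q=P\sqcap Q$$
by Lemma \ref{Lem:TopUnit} and Lemma \ref{Lem:IdealProps}. Hence $Y\sqsubseteq P\sqcap Q$, and so $\varphi(\psi(Y))=Y$.

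Thus $\varphi$ is an order isomorphism onto $\downset{\varphi(\Top_{AB})}$, and therefore a Heyting algebra isomorphism. If desired, one can also check directly that $\varphi(X\to Y)=(\varphi(X)\to\varphi(Y))\wedge\varphi(\Top_{AB})$ from the order isomorphism. This identity holds because the left-hand side is the largest element $Z$ of the downset satisfying $Z\wedge\varphi(X)\sqsubseteq\varphi(Y)$.
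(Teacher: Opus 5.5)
Your proof is correct, but it takes a different route from the paper's.

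\textbf{The paper's route.} The paper proceeds in these steps:
\begin{enumerate}
\item It checks by direct computation that $\varphi$ preserves $\sqcup$ and $\sqcap$; the meet case uses Lemma~\ref{Lem:Basics}(2) and the ideal property.
\item It proves $\psi(\varphi(X))=X$ exactly as you do.
\item For $Z\sqsubseteq\varphi(\Top_{AB})$ it derives $Z\sqsubseteq\Top_{\unit A};\Top_{A\unit};Z;\Top_{\unit B};\Top_{B\unit}$ from $Z=Z;Z;Z$.
\item It uses this to set up a Galois connection $Z\sqsubseteq\varphi(X)\iff\psi(Z)\sqsubseteq X$.
\item It gets $\varphi(\psi(Z))=Z$ by combining $Z\sqsubseteq\varphi(\psi(Z))$ with $\varphi(\psi(Z))\sqsubseteq\Top_{\unit\unit};Z;\Top_{\unit\unit}=Z$.
\item It checks $\varphi(X\to Y)=\varphi(X)\to_{\varphi(\Top_{AB})}\varphi(Y)$ separately through the Galois connection.
\end{enumerate}

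\textbf{Your route.} You obtain $\varphi(\psi(Y))=Y$ in one step. You write both $\varphi(\psi(Y))=P;Y;Q$ and $\varphi(\Top_{AB})=P;Q$ as meets via Lemma~\ref{Lem:IdealProps}. This is the same ingredient behind the paper's $Z=Z;Z;Z$ trick, namely that composition is meet on $\unit$, but used more directly. You then let the general fact that an order isomorphism preserves all order-definable structure handle $\sqcup$, $\sqcap$, $\Bot$, $\Top$ and $\to$ at once.

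\textbf{What each approach buys.} Yours is shorter and avoids Lemma~\ref{Lem:Basics}(2) entirely. Its cost is that you must know the Heyting operations on both sides are the order-theoretic ones. You justify this correctly, via the closure of ideals under $\sqcap,\sqcup,\to$ (Lemma~\ref{Lem:IdealProps}) and the relative-subalgebra lemma for $\to_{\varphi(\Top_{AB})}$. The paper's route gives explicit equational checks of each operation, with the Galois connection as a by-product.

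\textbf{A small imprecision.} The hom-sets of a Heyting allegory are only assumed to be Heyting algebras, not complete lattices. So instead of citing order isomorphisms between complete lattices, say that an order isomorphism between any posets preserves whatever joins, meets and relative pseudo-complements exist. That is all your argument needs.
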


\begin{proof}
First of all, $\varphi(X)$ is well-defined since every relation on the unit is an ideal and
$\varphi(X)\sqsubseteq\varphi(\Top_{AB})$ for all $X\in\ideals{A}{B}$. Obviously, $\varphi$ maps $\Bot_{AB}$ to $\Bot_{\unit\unit}$ and $\Top_{AB}$ to $\varphi(\Top_{AB})$.
The following computations verify that $\varphi$ is a lattice homomorphism.
\begin{align*}
	\varphi(X\sqcup Y)
	&= \Top_{\unit A};(X\sqcup Y);\Top_{B\unit}\\
	&= \Top_{\unit A};X;;\Top_{B\unit}\sqcup \Top_{\unit A};Y;\Top_{B\unit}\\	
	&= \varphi(X)\sqcup\varphi(Y),\\
	\varphi(X\sqcap Y)
	&= \Top_{\unit A};(X\sqcap Y);\Top_{B\unit}\\	
	&= \Top_{\unit A};(\Top_{AA};X;\Top_{BB}\sqcap Y);\Top_{B\unit} && \mbox{$X$ ideal}\\
	&= \Top_{\unit A};X;\Top_{B\unit}\sqcap \Top_{\unit A};Y;\Top_{B\unit} && \mbox{Lemma \ref{Lem:Basics}(2)}\\	
	&= \varphi(X)\sqcap\varphi(Y).
\end{align*}

Now, we define $\psi(Z):=\Top_{A\unit};Z;\Top_{\unit B}$ for $Z:\unit\to\unit$ with $Z\sqsubseteq\varphi(\Top_{AB})$. Similar to 
$\varphi$ it can be shown that $\psi$ is also a lattice homomorphism. We obtain
\begin{align*}
  	\psi(\varphi(X))
  	&= \Top_{A\unit};\Top_{\unit A};X;\Top_{B\unit};\Top_{\unit B}\\
  	&= \Top_{AA};X;\Top_{BB} && \mbox{Lemma \ref{Lem:TopUnit}}\\
  	&= X. && \mbox{$X$ ideal}
\end{align*}
In addition, for ideals $Z\in\ideals{\unit}{\unit}$ with $Z\sqsubseteq\varphi(\Top_{AB})$ we have
\begin{align*}
	Z
	&= Z;Z;Z && \mbox{Lemma \ref{Lem:IdealProps}}\\
	&\sqsubseteq \varphi(\Top_{AB});Z;\varphi(\Top_{AB}) && \mbox{assumption}\\
	&= \Top_{\unit A};\Top_{AB};\Top_{B\unit};Z;\Top_{\unit A};\Top_{AB};\Top_{B\unit}\\
	&\sqsubseteq \Top_{\unit A};\Top_{A\unit};Z;\Top_{\unit B};\Top_{B\unit}.
\end{align*}

This implies for $X\in\ideals{A}{B}$
\begin{align*}
  	\lefteqn{Z\sqsubseteq\varphi(X)}\\
  	&\iff Z\sqsubseteq \Top_{\unit A};X;\Top_{B\unit}\\
  	&~~\Longrightarrow \Top_{A\unit};Z;\Top_{\unit B}\sqsubseteq\Top_{A\unit};\Top_{\unit A};X;\Top_{B\unit};\Top_{\unit B}\sqsubseteq\Top_{AA};X;\Top_{BB}\\
  	&\iff \psi(Z)\sqsubseteq X && \mbox{$X$ ideal}\\
	\lefteqn{\psi(Z)\sqsubseteq X}\\
	&\iff \Top_{A\unit};Z;\Top_{\unit B}\sqsubseteq X\\
	&~~\Longrightarrow \Top_{\unit A};\Top_{A\unit};Z;\Top_{\unit B};\Top_{B\unit}\sqsubseteq\Top_{\unit A};X;\Top_{B\unit}\\
	&~~\Longrightarrow Z\sqsubseteq\Top_{\unit A};X;\Top_{B\unit} && \mbox{see above}\\
	&\iff Z\sqsubseteq\varphi(X),
\end{align*}
i.e., that $\varphi$ and $\psi$ form a Galois connection. We obtain $Z\sqsubseteq\varphi(\psi(Z))$ since this is equivalent to $\psi(Z)\sqsubseteq\psi(Z)$ and
$\varphi(\psi(Z))=\Top_{\unit A};\Top_{A\unit};Z;\Top_{\unit B};\Top_{B\unit}\sqsubseteq\Top_{\unit\unit};Z;\Top_{\unit\unit}=Z$, i.e., $\varphi$ and $\psi$ are inverse to
each other, and, hence, $\varphi$ is bijective. It remains to show that 
$$ \varphi(X\to Y)=\varphi(X)\to_{\varphi(\Top_{AB})}\varphi(Y). $$

Therefore, assume $Z\sqsubseteq\varphi(\Top_{AB})$ and compute
\begin{align*}
	Z\sqsubseteq\varphi(X\to Y)
	&\iff \psi(Z)\sqsubseteq X\to Y && \mbox{Galois connection}\\
	&\iff \psi(Z)\sqcap X\sqsubseteq Y\\
	&\iff \psi(Z)\sqcap\psi(\varphi(X))\sqsubseteq Y && \mbox{see above}\\	
	&\iff \psi(Z\sqcap\varphi(X))\sqsubseteq Y && \mbox{$\psi$ lattice homomorphism}\\
	&\iff Z\sqcap\varphi(X)\sqsubseteq\varphi(Y) && \mbox{Galois connection}\\
	&\iff Z\sqsubseteq\varphi(X)\to\varphi(Y)\\	
	&\iff Z\sqsubseteq\varphi(X)\to_{\varphi(\Top_{AB})}\varphi(Y), && \mbox{$Z\sqsubseteq\varphi(\Top_{AB})$}
\end{align*}
verifying the desired property.
\end{proof}

Please note that if $\Top_{\unit A};\Top_{A\unit}=\Top_{\unit B};\Top_{B\unit}=\Top_{\unit\unit}$, then we have 
$\varphi(\Top_{AB})=\Top_{\unit\unit}$, i.e., the Heyting algebras $\ideals{\unit}{\unit}$ and $\ideals{A}{B}$ are isomorphic.
Consequently, if $\kat{R}$ is uniform, then the Heyting algebras of ideals $\ideals{A}{B}$ for all  objects $A$ and $B$ are isomorphic.

\section{The Heyting Allegory $\mathrm{Rel}({\mathcal H})$}

In this section we assume a fixed complete Heyting algebra ${\mathcal H}$ and introduce
the Heyting allegory $\mathrm{Rel}({\mathcal H})$. The objects of this allegory are
pairs $(A,\alpha)$ where $A$ is a set and $\alpha$ is a $H$-valued subset of $A$,
i.e., $\alpha$ is a function from $A$ to $H$. Given two objects $(A,\alpha)$ and $(B,\beta)$, 
we define $\alpha\times\beta:A\times B\to H$ by $(\alpha\times\beta)(a,b)=\alpha(a)\wedge\beta(b)$. Now
a morphism (relation) $Q:(A,\alpha)\to(B,\beta)$ is a $H$–valued relation $Q:A\times B\to H$ included in $\alpha\times\beta$.
For $Q:(A,\alpha)\to(B,\beta)$ and $R:(B,\beta)\to(C,\gamma)$ we define their composition by
$$ (Q;R)(a,c) := \bigvee_{b\in B}Q(a,b)\wedge R(b,c). $$

Please note that composition is well-defined since
\begin{align*}
	(Q;R)(a,c)
	&= \bigvee_{b\in B}Q(a,b)\wedge R(b,c)\\
	&\leq \bigvee_{b\in B}(\alpha(a)\wedge\beta(b))\wedge(\beta(b)\wedge\gamma(c))\\
	&\leq \bigvee_{b\in B}\alpha(a)\wedge\gamma(c)\\
	&= \alpha(a)\wedge\gamma(c)\\
	&= (\alpha\times\gamma)(a,c).
\end{align*}
The identity morphism on $(A,\alpha)$ is defined by
$$ \id_{(A,\alpha)}(a,a') := \left\{\begin{array}{ll} \alpha(a) &\mbox{if }a=a'\\ 0 &\mbox{otherwise}\end{array}\right. $$

Converse is defined as usual by $\trans{Q}(b,a):=Q(a,b)$ and all other constants and operations except the residual are defined
componentwise, i.e., we have
\begin{align*}
	(Q\sqcap R)(a,b) &:= Q(a,b)\wedge R(a,b),\\	
	(Q\sqcup R)(a,b) &:= Q(a,b)\vee R(a,b),\\	
	(Q\to R)(a,b) &:= Q(a,b)\to_{\alpha(a)\wedge\beta(b)} R(a,b),\\
      \Bot_{(A,\alpha)(B,\beta)}(a,b) &:= 0,\\
      \Top_{(A,\alpha)(B,\beta)}(a,b) &:= \alpha(a)\wedge\beta(b).
\end{align*}

Finally, the residual of $R:(B,\beta)\to(C,\gamma)$ and $S:(A,\alpha)\to(C,\gamma)$ is given by
$$ (\rres{S}{R})(a,b) := \bigwedge_{c\in C} R(b,c)\to_{\alpha(a)\wedge\beta(b)} S(a,c). $$

Please note that $\to_{\alpha(a)\wedge\beta(b)}$ is applied to elements in the definition of the residual above that are not necessarily in $\downset{(\alpha(a)\wedge\beta(b))}$.

In the following we will denote by $\overline{x}:A\to H$ for $x\in H$ the constant function that maps every element of the set $A$ to $x$.

\begin{theorem}\label{Th:RelHHeyting}
$\mathrm{Rel}({\mathcal H})$ is a Heyting allegory with unit $(\{\ast\},\overline{1})$.
\end{theorem}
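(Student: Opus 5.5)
We verify the axioms of Definition \ref{Def:HeytingAllegory} one after another, working pointwise. Throughout we use that every relation $Q:(A,\alpha)\to(B,\beta)$ satisfies $Q(a,b)\leq\alpha(a)\wedge\beta(b)$.

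\textbf{Category structure.} Well-definedness of composition was shown above. Associativity follows from the infinite distributive law, as for ordinary $H$-valued relations. For the identity law we compute
$$(\id_{(A,\alpha)};Q)(a,b)=\alpha(a)\wedge Q(a,b)=Q(a,b),$$
where the last step uses $Q\leq\alpha\times\beta$; the right identity law is symmetric.

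\textbf{Heyting algebra of homsets.} The homset $\mathrm{Rel}({\mathcal H})[(A,\alpha),(B,\beta)]$ is the product $\prod_{(a,b)}\downset{(\alpha(a)\wedge\beta(b))}$ with the componentwise order. By the first lemma each factor is a complete Heyting algebra with implication $\to_{\alpha(a)\wedge\beta(b)}$. A product of Heyting algebras is a Heyting algebra with componentwise operations, so the homset is one, with $\Bot$ and $\Top$ as defined.

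\textbf{Converse and modular law.} Converse is well-defined because $\alpha\times\beta$ is symmetric under swapping the arguments. The identities $\trans{(Q;R)}=\trans{R};\trans{Q}$ and $\trans{(\trans{Q})}=Q$, and monotonicity, are immediate.

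For the modular law we argue pointwise: for each $b$,
$$Q(a,b)\wedge R(b,c)\wedge S(a,c)\leq Q(a,b)\wedge\bigl(R(b,c)\wedge Q(a,b)\wedge S(a,c)\bigr).$$
The inner term is at most $R(b,c)\wedge(\trans{Q};S)(b,c)$. Taking joins over $b$ and distributing $S(a,c)$ inside the join gives the law. This is the classical argument for $H$-valued relations and needs no change, since composition and meet are unaffected by $\alpha$.

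\textbf{Residual (main obstacle).} We must show $X;R\sqsubseteq S\iff X\sqsubseteq\rres{S}{R}$ for $X\leq\alpha\times\beta$. First, $\rres{S}{R}$ is a legitimate morphism, since every term $\ldots\to_{\alpha(a)\wedge\beta(b)}\ldots$ is $\leq\alpha(a)\wedge\beta(b)$. This is why the relativized implication is used even on arguments outside the down set.

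Next, $X;R\sqsubseteq S$ holds iff $X(a,b)\wedge R(b,c)\leq S(a,c)$ for all $a,b,c$, which holds iff $X(a,b)\leq R(b,c)\to S(a,c)$ in ${\mathcal H}$. Because $X(a,b)\leq\alpha(a)\wedge\beta(b)$, this is equivalent to
$$X(a,b)\leq(R(b,c)\to S(a,c))\wedge\alpha(a)\wedge\beta(b)=R(b,c)\to_{\alpha(a)\wedge\beta(b)}S(a,c).$$
Taking the meet over $c$ gives $X\sqsubseteq\rres{S}{R}$. Note that we use the full Heyting implication of ${\mathcal H}$, not the first lemma's restricted statement, because $R(b,c)$ and $S(a,c)$ need not lie below $\alpha(a)\wedge\beta(b)$.

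\textbf{Unit.} For $\unit=(\{\ast\},\overline{1})$ we have $\Top_{\unit\unit}(\ast,\ast)=1=\id_\unit(\ast,\ast)$. For totality of $\Top_{(A,\alpha)\unit}$ we compute
$$(\Top_{(A,\alpha)\unit};\trans{\Top_{(A,\alpha)\unit}})(a,a')=\alpha(a)\wedge\alpha(a')\geq\id_{(A,\alpha)}(a,a').$$
The diagonal entries give $\alpha(a)$ and the off-diagonal entries of the identity are $0$, so $\id_{(A,\alpha)}\sqsubseteq\Top_{(A,\alpha)\unit};\trans{\Top_{(A,\alpha)\unit}}$, as required.
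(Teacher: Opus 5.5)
Your proposal is correct and follows the paper's route. The residual and unit arguments are the same pointwise computations, and the category, Heyting-algebra, converse and modular-law checks you spell out are exactly the parts the paper omits by deferring to Theorem~3.1 of \cite{Winter2007}.

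One minor caveat, inherited from the paper's formula: your claim that $\rres{S}{R}$ is always a morphism fails when $C=\emptyset$. In that case the empty meet is $1$ rather than $\alpha(a)\wedge\beta(b)$. The fix is to take the meet in $\downset{(\alpha(a)\wedge\beta(b))}$, or equivalently to additionally meet with $\alpha(a)\wedge\beta(b)$; this correctly yields the greatest relation there.
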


\begin{proof}
The properties of all operations in $\mathrm{Rel}({\mathcal H})$ except the residual are shown essentially identical to the proof provided in Theorem~3.1 in \cite{Winter2007} and, therefore, omitted. For the residual, first assume that $X;R\sqsubseteq S$ for relations $X:(A,\alpha)\to(B,\beta), R:(B,\beta)\to(C,\gamma)$ and $S:(A,\alpha)\to(C,\gamma)$.
Then we have $X(a,b)\wedge R(b,c)\leq\bigvee\limits_{b\in B}X(a,b)\wedge R(b,c)=(X;R)(a,c)\leq S(a,c)$ for all $a\in A, b\in B$ and $c\in C$.
This implies $X(a,b)\leq R(b,c)\to S(a,c)$, and, hence, $X(a,b)\leq R(b,c)\to_{\alpha(a)\wedge\beta(b)}S(a,c)$ since $X(a,b)\leq\alpha(a)\wedge\beta(b)$.
We conclude $X(a,b)\leq\bigwedge\limits_{c\in C}R(b,c)\to_{\alpha(a)\wedge\beta(b)}S(a,c)=(\rres{S}{R})(a,b)$, i.e., $X\sqsubseteq\rres{S}{R}$.
For the opposite implication assume $X\sqsubseteq\rres{S}{R}$. Then we obtain
$X(a,b)\leq(\rres{S}{R})(a,b)=\bigwedge\limits_{c\in C}R(b,c)\to_{\alpha(a)\wedge\beta(b)}S(a,c)\leq R(b,c)\to_{\alpha(a)\wedge\beta(b)}S(a,c)\leq R(b,c)\to S(a,c)$
for all for all $a\in A, b\in B$ and $c\in C$. This implies $X(a,b)\wedge R(b,c)\leq S(a,c)$, and, hence, $(X;R)(a,c)=\bigvee\limits_{b\in B}X(a,b)\wedge R(b,c)\leq S(a,c)$,
i.e., $X;R\sqsubseteq S$.

In order to verify that $(\{\ast\},\overline{1})$ is a unit notice that the relation $\Top_{(\{\ast\},\overline{1})(\{\ast\},\overline{1})}$ is component-wise given by
$\Top_{(\{\ast\},\overline{1})(\{\ast\},\overline{1})}(\ast,\ast)=1$ and, therefore, identical to $\id_{(\{\ast\},\overline{1})}$. Furthermore,
\begin{align*}
	(\Top_{(A,\alpha)(\{\ast\},\overline{1})};\Top_{(\{\ast\},\overline{1})(A,\alpha)})(a,a)
	&= \Top_{(A,\alpha)(\{\ast\},\overline{1})}(a,\ast);\Top_{(\{\ast\},\overline{1})(A,\alpha)}(\ast,a)\\
	&= \alpha(a)\\
	&= \id_{(A,\alpha)}(a,a)
\end{align*}
verifying that $\Top_{(A,\alpha)(\{\ast\},\overline{1})}$ is total.
\end{proof}

We are going to define arrow operations in $\mathrm{Rel}({\mathcal H})$ as follows.

\begin{definition}\label{Def:RelHOperators}
For every relation $Q:(A,\alpha)\to(B,\beta)$ in $\mathrm{Rel}({\mathcal H})$ we define the arrow operations by
\begin{align*}
	\up{Q}(a,b)
	&:= (Q(a,b))^{\uparrow_{\alpha(a)\wedge\beta(b)}}\\
	\down{Q}(a,b)
	&:= (Q(a,b))^{\downarrow_{\alpha(a)\wedge\beta(b)}}
\end{align*}
\end{definition}

The following definition combines a Heyting allegory with the very basic properties of arrow operations. This definition serves as the general context in which we will
investigate arrow operations.

\begin{definition}\label{Def:PairArrows}
Let $\kat{R}$ be a Heyting allegory. A pair $(\up{\rule{0pt}{6pt}},\down{\rule{0pt}{6pt}})$ of operations is called a pair of arrow operations on
$\kat{R}$ iff we have:
\begin{enumerate}
    	\item $\up{Q},\down{Q}:A\to B$ for all $Q:A\to B$.
    	\item $(\up{\rule{0pt}{6pt}},\down{\rule{0pt}{6pt}})$ is a Galois correspondence, i.e., $\up{Q}\sqsubseteq R$ iff $Q\sqsubseteq\down{R}$ for all $Q,R: A\to B$.
    	\item $\up{(Q\sqcap\down{R})}=\up{Q}\sqcap\down{R}$ for all $Q,R:A\to B$.
    	\item $\up{\trans{Q}}=\trans{\up{Q}}$ for all $Q:A\to B$.
\end{enumerate}
\end{definition}

\begin{lemma}\label{Lem:PairArrowsBasics}
Let $\kat{R}$ be a Heyting allegory and $(\up{\rule{0pt}{6pt}},\down{\rule{0pt}{6pt}})$ of arrow operations on $\kat{R}$. Then we have:
\begin{enumerate}
	\item $Q\sqsubseteq Q^{\uparrow\downarrow}$ and $Q^{\downarrow\uparrow}\sqsubseteq Q$.
	\item $\down{\rule{0pt}{6pt}}$ and $\up{\rule{0pt}{6pt}}$ are monotone.
	\item $\up{Q}=Q^{\uparrow\downarrow\uparrow}$ and $\down{Q}=Q^{\downarrow\uparrow\downarrow}$.
	\item $\down{(Q\sqcap R)}=\down{Q}\sqcap\down{R}$ and $\up{(Q\sqcup R)}=\up{Q}\sqcup\up{R}$.
	\item $\up{\Bot}_{AB}=\Bot_{AB}$.
	\item $\trans{\down{Q}}=\down{\trans{Q}}$ for all $Q:A\to B$.
\end{enumerate}
\end{lemma}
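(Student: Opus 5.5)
Items (1)–(4) only use the Galois correspondence of Definition~\ref{Def:PairArrows}(2), so they follow the standard pattern. For (1), we instantiate the correspondence with $R:=\up{Q}$: from $\up{Q}\sqsubseteq\up{Q}$ we get $Q\sqsubseteq Q^{\uparrow\downarrow}$. Dually, from $\down{Q}\sqsubseteq\down{Q}$ we get $Q^{\downarrow\uparrow}\sqsubseteq Q$.

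For (2), assume $Q\sqsubseteq R$. Then $Q\sqsubseteq R\sqsubseteq R^{\uparrow\downarrow}$ by (1), so the correspondence yields $\up{Q}\sqsubseteq\up{R}$. Symmetrically, $Q^{\downarrow\uparrow}\sqsubseteq Q\sqsubseteq R$ gives $\down{Q}\sqsubseteq\down{R}$.

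For (3), we have $\up{Q}\sqsubseteq (\up{Q})^{\downarrow\uparrow\,}$ after applying (1) to $\up{Q}$ in the form $\up{Q}\sqsubseteq\up{Q}^{\downarrow\,\uparrow}$. To get this, start from $Q\sqsubseteq Q^{\uparrow\downarrow}$ and apply the monotone $\up{}$, which gives $\up{Q}\sqsubseteq Q^{\uparrow\downarrow\uparrow}$. Conversely, (1) applied to $\up{Q}$ gives $(\up{Q})^{\downarrow\uparrow}\sqsubseteq\up{Q}$. The equation for $\down{}$ is obtained dually.

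For (4), lower adjoints preserve all existing joins and upper adjoints preserve meets. Concretely, $X\sqsubseteq\down{(Q\sqcap R)}$ iff $\up{X}\sqsubseteq Q\sqcap R$ iff $\up{X}\sqsubseteq Q$ and $\up{X}\sqsubseteq R$ iff $X\sqsubseteq\down{Q}\sqcap\down{R}$. The join statement is proved the same way.

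For (5), the empty join is preserved: $\up{\Bot_{AB}}\sqsubseteq R$ iff $\Bot_{AB}\sqsubseteq\down{R}$, which always holds. Taking $R=\Bot_{AB}$ gives $\up{\Bot_{AB}}=\Bot_{AB}$.

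For (6), the only step using more than the Galois property is axiom (4) of Definition~\ref{Def:PairArrows}. The plan is to show that $\down{\trans{Q}}$ and $\trans{\down{Q}}$ have the same lower bounds. Since converse is a monotone involution, it is an order isomorphism between $\kat{R}[A,B]$ and $\kat{R}[B,A]$. For $X:B\to A$ we compute
\[
X\sqsubseteq\down{\trans{Q}}\iff\up{X}\sqsubseteq\trans{Q}\iff\trans{\up{X}}\sqsubseteq Q\iff\up{\trans{X}}\sqsubseteq Q\iff\trans{X}\sqsubseteq\down{Q}\iff X\sqsubseteq\trans{\down{Q}},
\]
using axiom (4) in the third step. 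Setting $X$ to each side in turn yields the equality.

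I expect no real obstacle. The only care needed is in (6), where we must use that converse reflects as well as preserves the order; this follows from $\trans{(\trans{Q})}=Q$ and monotonicity. Axiom (3) of Definition~\ref{Def:PairArrows} is not needed for any of these items.
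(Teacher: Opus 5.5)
Your proof is correct and matches the paper's: the paper declares (1)--(4) immediate from the Galois correspondence (you spell these out), and your arguments for (5) and (6) are the paper's, with the chain of equivalences in (6) read in the opposite direction. One expository slip: in (3), applying (1) to $\up{Q}$ gives $(\up{Q})^{\downarrow\uparrow}\sqsubseteq\up{Q}$, not the reverse inclusion you first attribute to it; the inclusion $\up{Q}\sqsubseteq Q^{\uparrow\downarrow\uparrow}$ comes, as you then correctly say, from applying the monotone operation $\up{\rule{0pt}{6pt}}$ to $Q\sqsubseteq Q^{\uparrow\downarrow}$.
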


\begin{proof}
\begin{enumerate}
	\item[{\rm 1.-4.}] These properties follow immediately from the fact that $\down{\rule{0pt}{6pt}}$ and $\up{\rule{0pt}{6pt}}$ is a Galois correspondence.
	\setcounter{enumi}{4}
	\item First of all, from $\Bot_{AB}\sqsubseteq\down{\Bot}_{AB}$ we obtain $\up{\Bot}_{AB}\sqsubseteq\Bot_{AB}$, and, hence,
	      $\up{\Bot}_{AB}=\Bot_{AB}$. 
    	\item For any $X:B\to A$ we have
    		\begin{align*}
    			X\sqsubseteq\trans{\down{Q}}
    			&\iff \trans{X}\sqsubseteq\down{Q}\\
    			&\iff \up{\trans{X}}\sqsubseteq Q && \mbox{Definition \ref{Def:PairArrows}(2)}\\
    			&\iff \trans{\up{X}}\sqsubseteq Q && \mbox{Definition \ref{Def:PairArrows}(4)}\\    			
    			&\iff \up{X}\sqsubseteq\trans{Q}\\    			
    			&\iff X\sqsubseteq\down{\trans{Q}}, && \mbox{Definition \ref{Def:PairArrows}(2)}
    		\end{align*}
    		from which we conclude $\trans{\down{Q}}=\down{\trans{Q}}$.
\end{enumerate}
\end{proof}

The arrow operations on $\mathrm{Rel}({\mathcal H})$ defined above constitute a pair of arrow operations as the following theorem verifies.

\begin{theorem}\label{Th:RelHArrowPair}
The  pair $(\up{\rule{0pt}{6pt}},\down{\rule{0pt}{6pt}})$ is a pair of arrow operations on $\mathrm{Rel}({\mathcal H})$.
\end{theorem}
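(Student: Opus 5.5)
We check the four conditions of Definition~\ref{Def:PairArrows} componentwise. Throughout, fix $Q,R:(A,\alpha)\to(B,\beta)$ and a pair $(a,b)\in A\times B$, and write $x:=\alpha(a)\wedge\beta(b)$. Then $Q(a,b),R(a,b)\in\downset{x}$, and all operations involved ($\sqcap$, $\sqsubseteq$, converse, and the arrow operations) are defined pointwise. So each condition reduces to a statement about elements of $\downset{x}$ and the two operations $u^{\uparrow_x}$ and $u^{\downarrow_x}$.

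\emph{Condition (1).} This is immediate: $u^{\uparrow_x},u^{\downarrow_x}\in\{0,x\}\subseteq\downset{x}$. Hence $\up{Q}$ and $\down{Q}$ are included in $\alpha\times\beta$ and are morphisms $(A,\alpha)\to(B,\beta)$.

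\emph{Condition (4).} We have $\trans{Q}:(B,\beta)\to(A,\alpha)$, so at $(b,a)$ the relevant bound is $\beta(b)\wedge\alpha(a)=x$. Therefore
$\up{\trans{Q}}(b,a)=Q(a,b)^{\uparrow_x}=\up{Q}(a,b)=\trans{\up{Q}}(b,a)$.

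\emph{Condition (2).} This is the Galois correspondence, which we verify pointwise. We show that for $u,v\in\downset{x}$,
$$ u^{\uparrow_x}\leq v\iff u\leq v^{\downarrow_x}. $$
We distinguish cases on $u$.
\begin{itemize}
\item If $u=0$, both sides hold trivially.
\item If $u\neq0$, the left side reads $x\leq v$. This is equivalent to $v^{\downarrow_x}=x$. Given $v\leq x$, the right side then holds because $u\leq x$.
\item Conversely, if $v^{\downarrow_x}=0$, the right side forces $u=0$, contradicting $u\neq0$. So the right side with $u\neq0$ implies $x\leq v$.
\end{itemize}
Since $\sqsubseteq$ is pointwise, the relational statement follows.

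\emph{Condition (3).} We need $\up{(Q\sqcap\down{R})}=\up{Q}\sqcap\down{R}$. Pointwise this reads
$$ (u\wedge v^{\downarrow_x})^{\uparrow_x}=u^{\uparrow_x}\wedge v^{\downarrow_x}, $$
which is exactly Lemma~\ref{Lem:ArrowComponents}(3) with $y=x$ (so $x\leq y$ holds trivially), $u=Q(a,b)$ and $v=R(a,b)$.

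The only place requiring any care is this last step. One must make sure that the relative element used for $\downarrow$ and for $\uparrow$ is the same $x$, which it is, since both relations share source and target. The potential obstacle, namely that the arrow operations are not defined uniformly across pairs, disappears because everything is evaluated pairwise with a fixed $x$.
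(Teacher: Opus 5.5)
Your proof is correct and follows the paper's argument: each of the four conditions is checked pointwise with the fixed bound $x=\alpha(a)\wedge\beta(b)$, the Galois correspondence is handled by a short case analysis, and converse is handled by a direct componentwise computation. For condition (3) you invoke Lemma~\ref{Lem:ArrowComponents}(3) with $y=x$; this is in fact the correct justification for that step, whereas the paper's proof cites part (2) there.
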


\begin{proof}
\begin{enumerate}
	\item This follows immediately from the definition of the arrow operations.
	\item Assume $\up{Q}\sqsubseteq R$ and distinguish two cases. If $\up{Q}(a,b)=0$, then $Q(a,b)=\up{Q}(a,b)=0\leq \down{R}(a,b)$. If $\up{Q}(a,b)=\alpha(a)\wedge\beta(b)$,
		then we have $R(a,b)=\alpha(a)\wedge\beta(b)$, and, hence, $Q(a,b)\leq\alpha(a)\wedge\beta(b)=R(a,b)=\down{R}(a,b)$. For the opposite implication
		assume $Q\sqsubseteq\down{R}$ and distinguish again two cases. If $\down{R}(a,b)=\alpha(a)\wedge\beta(b)$, then $\up{Q}(a,b)\leq\alpha(a)\wedge\beta(b)= R(a,b)$.
		If $\down{R}(a,b)=0$, then $Q(a,b)=0$, and, hence, $\up{Q}(a,b)=0\leq R(a,b)$. 
	\item We compute
		\begin{align*}
			\up{(Q\sqcap\down{R})}(a,b)
			&= ((Q\sqcap\down{R})(a,b))^{\uparrow_{\alpha(a)\wedge\beta(b)}}\\
			&= (Q(a,b)\wedge\down{R}(a,b))^{\uparrow_{\alpha(a)\wedge\beta(b)}}\\	
			&= (Q(a,b)\wedge(R(a,b))^{\downarrow_{\alpha(a)\wedge\beta(b)}})^{\uparrow_{\alpha(a)\wedge\beta(b)}}\\
			&= (Q(a,b))^{\uparrow_{\alpha(a)\wedge\beta(b)}}\wedge(R(a,b))^{\downarrow_{\alpha(a)\wedge\beta(b)}} && \mbox{Lemma \ref{Lem:ArrowComponents}(2)}\\
			&= \up{Q}(a,b)\wedge\down{R}(a,b)\\
			&= (\up{Q}\sqcap\down{R})(a,b).
		\end{align*}
	\item This is shown by
		\begin{align*}
			\up{\trans{Q}}(b,a)
			&= (\trans{Q}(b,a))^{\uparrow_{\beta(b)\wedge\alpha(a)}}\\
			&=  (Q(a,b))^{\uparrow_{\alpha(a)\wedge\beta(b)}}\\
			&= \up{Q}(a,b)\\
			&= \trans{\up{Q}}(b,a).
		\end{align*}
\end{enumerate}
\end{proof}

With the next definition we want to cover the situation that all relations on an object $A$ use the same Heyting algebra for every pair of elements. Intuitively, this is achieved by requiring that
every relation $R:A\to A$ restricted to a subset of $A$ uses the same Heyting algebra than $R$ itself.

\begin{definition}
Let $\kat{R}$ be a Heyting allegory with a unit $\unit$ and a pair of arrow operation $\up{\rule{0pt}{6pt}}$on $\kat{R}$. Then an object $A$ is
called uniform iff for all non-zero relations $v:\unit\to A$ we have 
$$ \up{v};\Top_{A\unit}=\Top_{\unit A};\Top_{A\unit}. $$
Furthermore, we call $\kat{R}$ locally uniform iff all objects are uniform.
\end{definition}

The following lemma shows that the notion of a uniform object in $\mathrm{Rel}({\mathcal H})$ indeed covers the intuition mentioned above. To this end we will use the abbreviation 
${\rm sup}~\alpha:=\bigvee\limits_{a\in A}\alpha(a)$ in the remainder of the paper.

\begin{lemma}\label{Lem:RelHobjectuniform}
An object $(A,\alpha)$ in $\mathrm{Rel}({\mathcal H})$ is uniform iff $\alpha=\overline{{\rm sup}~\alpha}$.
\end{lemma}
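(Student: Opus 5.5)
We compute both sides of the defining equation of a uniform object componentwise in $\mathrm{Rel}({\mathcal H})$, writing $\unit=(\{\ast\},\overline{1})$. A relation $v:\unit\to(A,\alpha)$ is a function $v(\ast,a)\leq 1\wedge\alpha(a)=\alpha(a)$. By Definition~\ref{Def:RelHOperators} we have
$$\up{v}(\ast,a)=v(\ast,a)^{\uparrow_{\alpha(a)}},$$
which equals $\alpha(a)$ if $v(\ast,a)\neq 0$ and $0$ otherwise. Since $\Top_{(A,\alpha)\unit}(a,\ast)=\alpha(a)$, we obtain
$$(\up{v};\Top_{A\unit})(\ast,\ast)=\bigvee\{\alpha(a)\mid v(\ast,a)\neq 0\}.$$
Similarly,
$$(\Top_{\unit A};\Top_{A\unit})(\ast,\ast)=\bigvee_{a\in A}\alpha(a)={\rm sup}~\alpha.$$
Hence $(A,\alpha)$ is uniform iff for every non-zero $v$ the join of $\alpha$ over the support $\{a\mid v(\ast,a)\neq0\}$ equals ${\rm sup}~\alpha$.

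For the direction ``$\Leftarrow$'', assume $\alpha=\overline{{\rm sup}~\alpha}$. A non-zero $v$ has a non-empty support, so the join over the support is a join of copies of the constant ${\rm sup}~\alpha$ and therefore equals ${\rm sup}~\alpha$. This gives the required equation.

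For the direction ``$\Rightarrow$'', we fix $a_0\in A$ and choose $v$ supported exactly at $a_0$, namely $v(\ast,a_0)=\alpha(a_0)$ and $v(\ast,a)=0$ otherwise. Uniformity then gives $\alpha(a_0)={\rm sup}~\alpha$ for every $a_0$, i.e., $\alpha=\overline{{\rm sup}~\alpha}$.

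The main obstacle is the case $\alpha(a_0)=0$: the point-supported $v$ above is then the zero relation, so the hypothesis does not apply to it. We handle this by cases. If $\alpha(a)=0$ for all $a\in A$, including the case $A=\emptyset$, then ${\rm sup}~\alpha=0$ and $\alpha=\overline{0}$ holds trivially. Otherwise every nonzero value of $\alpha$ equals ${\rm sup}~\alpha$ by the argument above, and any point $a_0$ with $\alpha(a_0)=0$ would contradict $\alpha=\overline{{\rm sup}~\alpha}$ whenever ${\rm sup}~\alpha\neq0$. The point-supported argument cannot exclude such zero points, since no non-zero $v$ can be supported at them: indeed $v(\ast,a)\leq\alpha(a)=0$ there, so adding or removing such points does not change either side of the uniformity equation. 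Hence an object taking the value $0$ at some points and ${\rm sup}~\alpha\neq0$ elsewhere satisfies the uniformity equation but not $\alpha=\overline{{\rm sup}~\alpha}$, and ``$\Rightarrow$'' only holds up to such points. The statement is therefore false as literally stated, and the plan proves it only under the hypothesis that $\alpha(a)\neq 0$ for all $a\in A$ (equivalently, identifying objects up to points of degree $0$, which are invisible to all relations). I would check whether the paper adopts this convention.
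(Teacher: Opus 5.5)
Your argument is the same as the paper's: evaluate both sides at $(\ast,\ast)$, use the point-supported relation $v_a$ (value $\alpha(a)$ at $a$, $0$ elsewhere) for ``$\Rightarrow$'', and use the non-empty support of a non-zero $v$ for ``$\Leftarrow$''. Your objection is correct and exposes a gap in the paper's proof, which simply asserts that $v_a$ is non-zero; the paper places no restriction on $\alpha$, and for any non-trivial ${\mathcal H}$ the object $A=\{r,s\}$ with $\alpha(r)=1$, $\alpha(s)=0$ is uniform (both sides equal $1$ for every non-zero $v$) although $\alpha\neq\overline{{\rm sup}~\alpha}$, so the correct statement is that $(A,\alpha)$ is uniform iff $\alpha(a)\in\{0,{\rm sup}~\alpha\}$ for all $a\in A$, which is exactly what your argument establishes.
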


\begin{proof}
First we compute 
\begin{align*}
	(\Top_{(\{\ast\},\overline{1})(A,\alpha)};\Top_{(A,\alpha)(\{\ast\},\overline{1})})(\ast,\ast)
	&= \bigvee_{a\in A}\Top_{(\{\ast\},\overline{1})(A,\alpha)}(\ast,a)\wedge\Top_{(A,\alpha)(\{\ast\},\overline{1})}(a,\ast)\\
	&= \bigvee_{a\in A}\alpha(a)\\
	&= {\rm sup}~\alpha.
\end{align*}
Now, assume that $(A,\alpha)$ is uniform. Then for a given $a\in A$ we define a relation $v_a:(\{\ast\},\overline{1})\to(A,\alpha)$ by
$$ v_a(\ast,a') := \begin{cases}\alpha(a) &\text{if }a=a',\\0 & \text{otherwise}.\end{cases} $$
$v_a$ is a non-zero relation and we have $\up{v}_a=v_a$. We obtain
\begin{align*}
	(\up{v}_a;\Top_{(A,\alpha)(\{\ast\},\overline{1})})(\ast,\ast)
	&= (v_a;\Top_{(A,\alpha)(\{\ast\},\overline{1})})(\ast,\ast)\\
	&= \bigvee_{a'\in A}v_a(\ast,a')\wedge\Top_{(A,\alpha)(\{\ast\},\overline{1})}(a',\ast)\\	
	&= \bigvee_{a'\in A}v_a(\ast,a')\wedge\alpha(a')\\	
	&= \alpha(a)
\end{align*}

The calculations above show that $\alpha(a)={\rm sup}~\alpha$ for all $a\in A$, i.e., that $\alpha=\overline{{\rm sup}~\alpha}$. For the
opposite implication assume $\alpha=\overline{{\rm sup}~\alpha}$ and a non-zero relation $v:(\{\ast\},\overline{1})\to(A,\alpha)$. Then we have 
$v(\ast,a)\leq\alpha(a)={\rm sup}~\alpha$ and $v(\ast,a)\ne 0$ for all elements $a$ from a non-empty set $M\subseteq A$.
The latter implies that $\up{v}(\ast,a)={\rm sup}~\alpha$ for all $a\in M$. We obtain
\begin{align*}
	(\up{v};\Top_{(A,\alpha)(\{\ast\},\overline{1})})(\ast,\ast)
	&= \bigvee_{a\in A}\up{v}(\ast,a)\wedge\Top_{(A,\alpha)(\{\ast\},\overline{1})}(a,\ast)\\	
	&= \bigvee_{a\in M}\up{v}(\ast,a)\wedge\alpha(a)\\	
	&= \bigvee_{a\in M}{\rm sup}~\alpha && \mbox{$\alpha=\overline{{\rm sup}~\alpha}$}\\
	&= {\rm sup}~\alpha\\
	&= (\Top_{(\{\ast\},\overline{1})(A,\alpha)};\Top_{(A,\alpha)(\{\ast\},\overline{1})})(\ast,\ast).
\end{align*}
\end{proof}

Now we are ready to show some additional properties of the arrow operations in $\mathrm{Rel}({\mathcal H})$.

\begin{theorem}\label{Th:RelHArrow}
The following properties are valid in $\mathrm{Rel}({\mathcal H})$:
\begin{enumerate}
	\item $\up{\id}_{(A,\alpha)}=\id_{(A,\alpha)}$ and $\up{\Top}_{(A,\alpha)(B,\beta)}=\Top_{(A,\alpha)(B,\beta)}$ for all objects $(A,\alpha)$ and $(B,\beta)$.
    	\item $\up{(Q;\down{R})}\sqcap\Top_{(A,\alpha)(B,\beta)};\Top_{(B,\beta)(C,\gamma)}=(\up{Q}\sqcap\Top_{(A,\alpha)(C,\gamma)};\Top_{(C,\gamma)(B,\beta)});\down{R}$ 
    		for all $Q:(A,\alpha)\to(B,\beta)$ and $R:(B,\beta)\to(C,\gamma)$ and uniform objects $(B,\beta)$ and $(C,\gamma)$.
    	\item If $Q\ne\Bot_{(\{\ast\},\overline{1})(\{\ast\},\overline{1})}$, then we have $\up{Q}=\Top_{(\{\ast\},\overline{1})(\{\ast\},\overline{1})}$.
\end{enumerate}
\end{theorem}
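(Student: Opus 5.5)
All three statements will be verified componentwise, using the pointwise definitions of Definition \ref{Def:RelHOperators} together with Lemma \ref{Lem:ArrowComponents} and the description of uniform objects from Lemma \ref{Lem:RelHobjectuniform}.

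For (1), fix $a,a'\in A$. If $a\ne a'$, then $\id_{(A,\alpha)}(a,a')=0$, so its $\uparrow$-image is $0$. If $a=a'$, the value is $\alpha(a)$. If $\alpha(a)=0$ both sides are $0$; otherwise $\alpha(a)^{\uparrow_{\alpha(a)\wedge\alpha(a)}}=\alpha(a)$. For $\Top$ the value at $(a,b)$ is $x:=\alpha(a)\wedge\beta(b)$, and $x^{\uparrow_x}=x$ in both cases $x=0$ and $x\ne 0$. Statement (3) is similar. A relation $Q$ on the unit is a single value $Q(\ast,\ast)\in H$, and $Q\ne\Bot$ means this value is nonzero. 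Hence $\up{Q}(\ast,\ast)=1=\Top(\ast,\ast)$.

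Statement (2) is the main work. By Lemma \ref{Lem:RelHobjectuniform} I replace $\beta$ by the constant $\overline{s}$ with $s={\rm sup}~\beta$, and $\gamma$ by $\overline{t}$ with $t={\rm sup}~\gamma$. The meet terms then become
\[
(\Top;\Top)(a,c)=\alpha(a)\wedge s\wedge t \quad\text{and}\quad (\Top;\Top)(a,b)=\alpha(a)\wedge t\wedge s
\]
whenever $B$ resp.\ $C$ is nonempty. The empty cases are handled separately, since then both sides are $\Bot$.

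Put $x=\alpha(a)\wedge s\wedge t$. This element is the same for every $b$ and every $c$, and it is exactly the bound $\alpha(a)\wedge\gamma(c)\wedge s$ that appears on both sides. For the left side, write $r_b:=R(b,c)^{\downarrow_{s\wedge t}}$, which is either $s\wedge t$ or $0$. Then
\[
(Q;\down{R})(a,c)=\bigvee_b Q(a,b)\wedge r_b .
\]
Applying $\uparrow_{\alpha(a)\wedge t}$ and then meeting with $s$ gives, by Lemma \ref{Lem:ArrowComponents}(2), the element $\bigl(\bigvee_b Q(a,b)\wedge r_b\bigr)^{\uparrow_x}$. This is $x$ iff some $b$ has $Q(a,b)\ne 0$ and $r_b=s\wedge t$; here I use $Q(a,b)\le\alpha(a)\wedge s$, so $Q(a,b)\wedge(s\wedge t)=Q(a,b)\wedge t$.

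\emph{Main obstacle:} I must check that $Q(a,b)\wedge t\ne 0$ iff $Q(a,b)\ne 0$. This is false in general, so the careful step is to compare with the right side. The right side is
\[
\bigvee_b \bigl(Q(a,b)^{\uparrow_{\alpha(a)\wedge s}}\wedge t\bigr)\wedge r_b
=\bigvee_b Q(a,b)^{\uparrow_{x}}\wedge r_b ,
\]
again using Lemma \ref{Lem:ArrowComponents}(2). This equals $x$ iff some $b$ has $Q(a,b)\ne 0$ and $r_b\ne 0$, because $x\le s\wedge t$.

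On the left I should therefore not simplify first. The operation $\uparrow$ only tests whether its argument is nonzero, so I need the left argument $\bigvee_b Q(a,b)\wedge r_b$ to be nonzero exactly under the same condition. This may require the interpretation $u^{\uparrow_x}$ for $u\wedge$ (the bound) rather than $u$; I would first try using Lemma \ref{Lem:ArrowComponents}(1) together with (3) pointwise in $b$, which gives
\[
(Q(a,b)\wedge r_b)^{\uparrow}=Q(a,b)^{\uparrow}\wedge r_b .
\]
Here $r_b$ lies in the image of $\downarrow_{s\wedge t}$, and the hypothesis $x\le y$ of (3) holds for the appropriate bounds. Distributing over the join then matches the two sides term by term, after which I intersect both sides with $x$.
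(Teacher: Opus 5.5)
Parts (1) and (3) are fine and coincide with the paper's componentwise check. The gap is in the final step of (2). Lemma~\ref{Lem:ArrowComponents}(3) has three hypotheses: $x\le y$, $v\in\downset{y}$ and $u\in\downset{x}$. You check only the first two. With $u=Q(a,b)$, $x=\alpha(a)\wedge s\wedge t$ and $y=s\wedge t$, you only know $Q(a,b)\le\alpha(a)\wedge s$.

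The missing hypothesis is essential. The lemma's proof uses $u\wedge y=u\neq 0$. If $u\neq 0$ and $v^{\downarrow_y}=y$ but $u\wedge y=0$, then $(u\wedge v^{\downarrow_y})^{\uparrow_x}=0$ while $u^{\uparrow_x}\wedge v^{\downarrow_y}=x$. Here $u\wedge y=Q(a,b)\wedge t$. So invoking the lemma silently assumes exactly the equivalence $Q(a,b)\wedge t\neq 0\iff Q(a,b)\neq 0$ that you correctly flagged as false in general.

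Your own reduction, stated precisely, says the following for $x\neq 0$. The left side at $(a,c)$ is $x$ iff some $b$ has $r_b\neq 0$ and $Q(a,b)\wedge t\neq 0$. The right side is $x$ iff some $b$ has $r_b\neq 0$ and $Q(a,b)\neq 0$. These conditions genuinely differ.

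Consequently (2) is false as stated, and no argument can close the gap. Take nonzero $p,q\in H$ with $p\wedge q=0$, and set $A=B=(\{\ast\},\overline{1})$ and $C=(\{\ast\},\overline{q})$, so $B$ and $C$ are uniform. Let $Q(\ast,\ast)=p$ and $R(\ast,\ast)=q$.
\begin{itemize}
\item Left-hand side: $\down{R}=R$ and $(Q;\down{R})(\ast,\ast)=p\wedge q=0$, so it is $\Bot$.
\item Right-hand side: $\up{Q}(\ast,\ast)=1$ and $(\Top_{AC};\Top_{CB})(\ast,\ast)=q$, so its value is $q\wedge q=q\neq 0$.
\end{itemize}
With $p=a$ and $q=d$ from Figure~\ref{fig:HeytingExample}, this is precisely the example in Section~3 showing that $\up{(Q;\down{R})}=\up{Q};\down{R}$ fails. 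There the paper itself computes $\up{(Q;\down{R})}=\Bot$ and $\up{Q};\down{R}=d$. Meeting with the two $\Top;\Top$ terms, both equal to $d$, changes neither value.

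The paper's proof of (2) takes the same step you do: the line justified by Lemma~\ref{Lem:ArrowComponents}(3) is applied with $u=Q(a,b)$, which need not lie below $\alpha(a)\wedge{\rm sup}\,\beta\wedge{\rm sup}\,\gamma$. So it does not supply a missing idea either. Your computation does become correct under an extra hypothesis such as ${\rm sup}\,\beta\le{\rm sup}\,\gamma$, since then $Q(a,b)\le\alpha(a)\wedge s=x$. It also works if $\up{Q}$ on the right is replaced by $\up{(Q\sqcap\Top_{AC};\Top_{CB})}$.
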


\begin{proof}
\begin{enumerate}
	\item This is shown by
		\begin{align*}
			\up{\id}_{(A,\alpha)}(a_1,a_2)
			&= (\id_{(A,\alpha)}(a_1,a_2))^{\uparrow_{\alpha(a_1)\wedge\alpha(a_2)}}\\
			&= \begin{cases}\alpha(a_1)^{\uparrow_{\alpha(a_1)}} &\text{if }a_1=a_2\\0^{\uparrow_{\alpha(a_1)\wedge\alpha(a_2)}}&\text{otherwise}\end{cases}\\
			&= \begin{cases}\alpha(a_1) &\text{if }a_1=a_2\\0 &\text{otherwise}\end{cases}\\
			&= \id_{(A,\alpha)}(a_1,a_2),\\
			\up{\Top}_{(A,\alpha)(B,\beta)}(a,b)
			&= (\Top_{(A,\alpha)(B,\beta)}(a,b))^{\uparrow_{\alpha(a)\wedge\beta(b)}}\\			
			&= (\alpha(a)\wedge\beta(b))^{\uparrow_{\alpha(a)\wedge\beta(b)}}\\
			&= \alpha(a)\wedge\beta(b)\\
			&= \Top_{(A,\alpha)(B,\beta)}(a,b).
		\end{align*}
	\item Assume $Q:(A,\alpha)\to(B,\beta)$, $R:(B,\beta)\to(C,\gamma)$, and $(B,\beta)$ and $(C,\gamma)$ are uniform. From Lemma \ref{Lem:RelHobjectuniform}
		we get $\beta=\overline{{\rm sup}~\beta}$ and $\gamma=\overline{{\rm sup}~\gamma}$. First we obtain for all $S:(D,\delta)\to(E,\eta)$ and $(F,\phi)$
		\begin{align*}
			\lefteqn{(S\sqcap\Top_{(D,\delta)(F,\phi)};\Top_{(F,\phi)(E,\eta)})(d,e)}\\
			&= S(d,e)\wedge(\Top_{(D,\delta)(F,\phi)};\Top_{(F,\phi)(E,\eta)})(d,e)\\
			&= S(d,e)\wedge\bigvee_{f\in F}\Top_{(D,\delta)(F,\phi)}(d,f)\wedge\Top_{(F,\phi)(E,\eta)})(f,e)\\
			&= S(d,e)\wedge\bigvee_{f\in F}\delta(d)\wedge\phi(f)\wedge\eta(e)\\	
			&= S(d,e)\wedge\delta(d)\wedge\eta(e)\wedge{\rm sup}~\phi\\
			&= S(d,e)\wedge{\rm sup}~\phi,
		\end{align*}
		where the last equation follows from $S(d,e)\leq\delta(d)\wedge\eta(e)$. Now we compute
		\begin{align*}
			\lefteqn{(\up{(Q;\down{R})}\sqcap\Top_{(A,\alpha)(B,\beta)};\Top_{(B,\beta)(C,\gamma)})(a,c)}\\
			&= (\up{(Q;\down{R})})(a,c)\wedge{\rm sup}~\beta && \mbox{see above}\\
			&= ((Q;\down{R})(a,c))^{\uparrow_{\alpha(a)\wedge\gamma(c)}}\wedge{\rm sup}~\beta\\
			&= ((Q;\down{R})(a,c))^{\uparrow_{\alpha(a)\wedge{\rm sup}\,\beta\wedge\gamma(c)}} && \mbox{Lemma~\ref{Lem:ArrowComponents}(2)}\\
			&= ((Q;\down{R})(a,c))^{\uparrow_{\alpha(a)\wedge{\rm sup}\,\beta\wedge{\rm sup}\,\gamma}} && \mbox{$\gamma=\overline{{\rm sup}~\gamma}$}\\
			&= \left(\bigvee_{b\in B}Q(a,b)\wedge\down{R}(b,c)\right)^{\uparrow_{\alpha(a)\wedge{\rm sup}\,\beta\wedge{\rm sup}\,\gamma}}\\
			&= \left(\bigvee_{b\in B}Q(a,b)\wedge(R(b,c))^{\downarrow_{\beta(b)\wedge\gamma(c)}}\right)^{\uparrow_{\alpha(a)\wedge{\rm sup}\,\beta\wedge{\rm sup}\,\gamma}}\\
			&= \left(\bigvee_{b\in B}Q(a,b)\wedge(R(b,c))^{\downarrow_{{\rm sup}\,\beta\wedge{\rm sup}\,\gamma}}\right)^{\uparrow_{\alpha(a)\wedge{\rm sup}\,\beta\wedge{\rm sup}\,\gamma}}
				&& \mbox{$\beta=\overline{{\rm sup}~\beta}, \gamma=\overline{{\rm sup}~\gamma}$}\\
			&= \bigvee_{b\in B}(Q(a,b)\wedge(R(b,c))^{\downarrow_{{\rm sup}\,\beta\wedge{\rm sup}\,\gamma}})^{\uparrow_{\alpha(a)\wedge{\rm sup}\,\beta\wedge{\rm sup}\,\gamma}}
				&& \mbox{Lemma~\ref{Lem:ArrowComponents}(1)}\\
			&= \bigvee_{b\in B}(Q(a,b))^{\uparrow_{\alpha(a)\wedge{\rm sup}\,\beta\wedge{\rm sup}\,\gamma}}\wedge(R(b,c))^{\downarrow_{{\rm sup}\,\beta\wedge{\rm sup}\,\gamma}}
				&& \mbox{Lemma~\ref{Lem:ArrowComponents}(3)}\\	
		      &= \bigvee_{b\in B}(Q(a,b))^{\uparrow_{\alpha(a)\wedge{\rm sup}\,\beta}}\wedge{\rm sup}~\gamma\wedge(R(b,c))^{\downarrow_{{\rm sup}\,\beta\wedge{\rm sup}\,\gamma}}
				&& \mbox{Lemma~\ref{Lem:ArrowComponents}(2)}\\
			&= \bigvee_{b\in B}(Q(a,b))^{\uparrow_{\alpha(a)\wedge\beta(b)}}\wedge{\rm sup}~\gamma\wedge(R(b,c))^{\downarrow_{\beta(b)\wedge\gamma(c)}}
				&& \mbox{$\beta=\overline{{\rm sup}~\beta}, \gamma=\overline{{\rm sup}~\gamma}$}\\
			&= \bigvee_{b\in B}\up{Q}(a,b)\wedge{\rm sup}~\gamma\wedge\down{R}(b,c)\\
			&= \bigvee_{b\in B}(\up{Q}\sqcap\Top_{(A,\alpha)(C,\gamma)};\Top_{(C,\gamma)(B,\beta)})(a,b)\wedge\down{R}(b,c) && \mbox{see above}\\
			&= ((\up{Q}\sqcap\Top_{(A,\alpha)(C,\gamma)};\Top_{(C,\gamma)(B,\beta)});\down{R})(a,c).
		\end{align*}
	\item Assume $Q\ne\Bot_{(\{\ast\},\overline{1})(\{\ast\},\overline{1})}$. Because $Q$ is a relation on a singleton set and $\overline{1}$ is a constant function, this follows immediately from the 
		definition of $\rule{0pt}{6pt}^{\uparrow_1}$.
\end{enumerate}
\end{proof}

We want to mention two full suballegories of $\mathrm{Rel}({\mathcal H})$. We define $\mathrm{Rel}^u({\mathcal H})$ to be the full
suballegory of $\mathrm{Rel}({\mathcal H})$ induced by all uniform objects of $\mathrm{Rel}({\mathcal H})$ and ${\mathcal H}{\rm-Rel}$ to be the full suballegory
of $\mathrm{Rel}({\mathcal H})$ (and/or $\mathrm{Rel}^u({\mathcal H})$) induced by all objects that have the function $\overline{1}$ for $\alpha$. Please note
that ${\mathcal H}{\rm-Rel}$ is exactly the arrow allegory ${\mathcal L}{\rm-Rel}$ in \cite{Winter2007}. These three allegories establish a hierarchy of allegories modeling three different levels of ''fuzziness''. In ${\mathcal H}{\rm-Rel}$ all relations use the same lattice of truth values, i.e., this allegory can be seen as the standard example of an arrow allegory. In $\mathrm{Rel}^u({\mathcal H})$ relations on different objects may use different lattices but all pairs within a single relation do use the same lattice of truth values. And finally in $\mathrm{Rel}({\mathcal H})$ every pair in each relation might use a different lattice. In the next section we are interested in characterizing these three situations abstractly by providing suitable axioms for each situation.

In the remainder of this section we will provide examples illustrating the necessity of the assumptions in Theorem~\ref{Th:RelHArrow}.
We will present those examples by using matrix representations of relations between finite objects in $\mathrm{Rel}(\mathcal H)$.
This notation allows explicit computations to be presented in a compact and transparent way and is particularly useful for
illustrating properties in $\mathrm{Rel}(\mathcal H)$. Throughout these examples we use the distributive lattice ${\mathcal H}$ whose Hasse diagram is shown in
Figure~\ref{fig:HeytingExample}. Please note that since ${\mathcal H}$ is finite it is indeed a complete Heyting algebra.

\begin{figure}[h]
\[\xymatrix@R=0.7em@C=0.7em{
 && 1\\
 &c\ar@{-}[ru]&&d\ar@{-}[lu]\\
 a\ar@{-}[ru]&&b\ar@{-}[lu]\ar@{-}[ru]\\
 &0\ar@{-}[lu]\ar@{-}[ru]
}\]
\caption{The finite Heyting algebra used in the following examples.}
\label{fig:HeytingExample}
\end{figure}

Suppose $(A,\alpha)$ and $(B,\beta)$ are objects of $\mathrm{Rel}(\mathcal H)$ so that
$A$ and $B$ are finite and enumerated, i.e., we have
\[
A=\{a_1,\ldots,a_m\},\qquad B=\{b_1,\ldots,b_n\}.
\]
Then a relation $Q:(A,\alpha)\longrightarrow(B,\beta)$ is completely determined by the following set of triples of values from $\mathcal{H}$
\[
\alpha(a_i),~\beta(b_j),~Q(a_i,b_j), \qquad 1\le i\le m,\; 1\le j\le n,
\]
and therefore may be represented by the labeled matrix
\[
  Q = \kbordermatrix{
    & \beta(b_1) & {\displaystyle \cdots} & \beta(b_n) \\
    \alpha(a_1) & Q(a_1,b_1) & \cdots & Q(a_1,b_n) \\
    \vdots & \vdots & \ddots & \vdots \\
    \alpha(a_m) & Q(a_m,b_1) & \cdots & Q(a_m,b_n) 
  }
\]
Please note that every entry $Q(a_i,b_j)$ in such a matrix must be smaller or equal to both the corresponding row label $\alpha(a_i)$ and column label $\beta(b_j)$. Furthermore, composition of two relations
given in matrix form can be performed if the column labels of the first matrix equal the row labels of the second matrix. In that case the composition can be 
calculated by regular matrix multiplication known from linear algebra with meet ($\sqcap$) instead of multiplication and join ($\sqcup$) instead of summation.
For a concrete example, let
\begin{align*}
	A=\{r,s\},\qquad\alpha(r)=1,\;\alpha(s)=1,\\
      	B=\{u,v\},\qquad\beta(u)=1,\;\beta(v)=c,\\
      	C=\{x,y\},\qquad\gamma(x)=d,\;\gamma(y)=1.
\end{align*}
Then the relations $Q:(A,\alpha)\to(B,\beta)$ and $R:(B,\beta)\to(C,\gamma)$ defined by
\begin{align*}
	Q(r,u)=a, Q(r,v)=0, Q(s,u)=0, Q(s,v)=c,\\
	R(u,x)=d, R(u,y)=0, R(v,x)=0, R(v,y)=c
\end{align*}
may be represented in matrix form by
\[
  Q = \kbordermatrix{
    & 1 & c \\
    1 & a & 0\\
    1 & 0  & c 
  },\qquad
  R = \kbordermatrix{
    & d & 1 \\
    1 & d & 0\\
    c & 0  & c 
  },
\]
and the composition $Q;R$ by
\[
Q;R=\kbordermatrix{
    & d & 1 \\
    1 & 0 & 0\\
    1 & 0  & c 
  }.
\]
Our first two examples will show that the uniformity assumptions in Theorem~\ref{Th:RelHArrow}(2) cannot be dropped.
Therefore, we define $\gamma'$ as the constant function induced by $1$, i.e., $\gamma'=\overline{1}$. Then we define 
$R_1:(B,\beta)\to(C,\gamma')$ by using the same entries in the matrix as for $R$. This is well-defined since $\gamma$ is a smaller than or
equal to $\gamma'$ for all values of $C$. In this case $(C,\gamma')$ is uniform but $(B,\beta)$ is not. We obtain for the left-hand
side of Theorem~\ref{Th:RelHArrow}(2)
\begin{align*}
	\lefteqn{\up{(Q;\down{R}_1)}\sqcap\Top_{(A,\alpha)(B,\beta)};\Top_{(B,\beta)(C,\gamma')}}\\
	&= \up{\left(\kbordermatrix{
	    & 1 & c \\
	    1 & a & 0\\
	    1 & 0  & c 
	  };
	  \kbordermatrix{
	    & 1 & 1 \\
	    1 & 0 & 0\\
	    c & 0  & c 
        }\right)}
        \sqcap\kbordermatrix{
	    & 1 & c \\
	    1 & 1 & c\\
	    1 & 1 & c 
        };\kbordermatrix{
	    & 1 & 1 \\
	    1 & 1 & 1\\
	    c & c & c 
        }\\
        &= \renewcommand{\kbrdelim}{)^\uparrow}\kbordermatrix{
	    & 1 & 1 \\
	    1 & 0 & 0\\
	    1 & 0 & c 
        }\renewcommand{\kbrdelim}{)}
        \sqcap\kbordermatrix{
	    & 1 & 1 \\
	    1 & 1 & 1\\
	    1 & 1 & 1 
        }\\
        &= \kbordermatrix{
	    & 1 & 1 \\
	    1 & 0 & 0\\
	    1 & 0 & 1
        }.
\end{align*}
For the right-hand side of Theorem~\ref{Th:RelHArrow}(2) we get
\begin{align*}
	\lefteqn{(\up{Q}\sqcap\Top_{(A,\alpha)(C,\gamma')};\Top_{(C,\gamma')(B,\beta)});\down{R}_1}\\
	&= \left(\kbordermatrix{
	    & 1 & c \\
	    1 & 1 & 0\\
	    1 & 0  & c 
	  }\sqcap\kbordermatrix{
	    & 1 & 1 \\
	    1 & 1 & 1\\
	    1 & 1 & 1 
        };\kbordermatrix{
	    & 1 & c \\
	    1 & 1 & 1\\
	    1 & c & c 
        }\right); \kbordermatrix{
	    & 1 & 1 \\
	    1 & 0 & 0\\
	    c & 0  & c 
        }\\	
        &= \kbordermatrix{
	    & 1 & c \\
	    1 & 1 & 0\\
	    1 & 0  & c 
	  }; \kbordermatrix{
	    & 1 & 1 \\
	    1 & 0 & 0\\
	    c & 0  & c 
        }\\	
        &= \kbordermatrix{
	    & 1 & 1 \\
	    1 & 0 & 0\\
	    c & 0 & c 
        }, 
\end{align*}
verifying that equation from Theorem~\ref{Th:RelHArrow}(2) is not valid. Similar to above
we define $\beta'$ as the constant function induced by $1$, i.e., $\beta'=\overline{1}$, and relation $Q_2:(A,\alpha)\to(B,\beta')$ and $R_2:(B,\beta')\to(C,\gamma)$
by using the same entries in the matrix as for $Q$ and $R$, respectively. Again, this is well-defined since $\beta$ is a smaller than or
equal to $\beta'$ for all values of $B$. This time $(B,\beta')$ is uniform but $(C,\gamma)$ is not. We obtain for the left-hand
side of Theorem~\ref{Th:RelHArrow}(2)
\begin{align*}
	\lefteqn{\up{(Q_2;\down{R}_2)}\sqcap\Top_{(A,\alpha)(B,\beta')};\Top_{(B,\beta')(C,\gamma)}}\\
	&= \up{\left(\kbordermatrix{
	    & 1 & 1 \\
	    1 & a & 0\\
	    1 & 0  & c 
	  };
	  \kbordermatrix{
	    & d & 1 \\
	    1 & d & 0\\
	    1 & 0  & 0 
        }\right)}
        \sqcap\kbordermatrix{
	    & 1 & 1 \\
	    1 & 1 & 1\\
	    1 & 1 & 1 
        };\kbordermatrix{
	    & d & 1 \\
	    1 & d & 1\\
	    1 & d & 1 
        }\\
        &= \renewcommand{\kbrdelim}{)^\uparrow}\kbordermatrix{
	    & d & 1 \\
	    1 & 0 & 0\\
	    1 & 0 & 0 
        }\renewcommand{\kbrdelim}{)}
        \sqcap\kbordermatrix{
	    & d & 1 \\
	    1 & d & 1\\
	    1 & d & 1 
        }\\
        &= \kbordermatrix{
	    & d & 1 \\
	    1 & 0 & 0\\
	    1 & 0 & 0
        }.
\end{align*}
For the right-hand side of Theorem~\ref{Th:RelHArrow}(2) we get
\begin{align*}
	\lefteqn{(\up{Q}_2\sqcap\Top_{(A,\alpha)(C,\gamma)};\Top_{(C,\gamma)(B,\beta')});\down{R}_2}\\
	&= \left(\kbordermatrix{
	    & 1 & 1 \\
	    1 & 1 & 0\\
	    1 & 0  & 1 
	  }\sqcap\kbordermatrix{
	    & d & 1 \\
	    1 & d & 1\\
	    1 & d & 1 
        };\kbordermatrix{
	    & 1 & 1 \\
	    d & d & d\\
	    1 & 1 & 1
        }\right); \kbordermatrix{
	    & d & 1 \\
	    1 & d & 0\\
	    1 & 0 & 0 
        }\\	
        &= \kbordermatrix{
	    & 1 & 1 \\
	    1 & 1 & 0\\
	    1 & 0  & 1 
	  }; \kbordermatrix{
	    & d & 1 \\
	    1 & d & 0\\
	    1 & 0 & 0 
        }\\	
        &= \kbordermatrix{
	    & d & 1 \\
	    1 & d & 0\\
	    1 & 0 & 0 
        }, 
\end{align*}
again verifying that equation from Theorem~\ref{Th:RelHArrow}(2) is not valid.

Arrow category use the axiom $\up{(Q;\down{R})}=\up{Q};\down{R}$ instead of Theorem~\ref{Th:RelHArrow}(2) modeling the situation where all relations use the same lattice of truth
values everywhere. Our next example will show that this equation is not valid, i.e., that the intersection with the composition of two universal relations in Theorem~\ref{Th:RelHArrow}(2)
cannot be dropped, even if all objects in question are uniform. Therefore, we define two relations $Q:(\{\ast\},\overline{1})\to(\{\ast\},\overline{1})$ 
and $R:(\{\ast\},\overline{1})\to(\{\ast\},\overline{d})$ by
\[
  Q=\kbordermatrix{
	    & 1\\
	    1\!\!\! & a
        },\qquad
  R=\kbordermatrix{
	    & d\\
	    1\!\!\! & d
        }.
\]
Please note that both objects $(\{\ast\},\overline{1})$ and $(\{\ast\},\overline{d})$ are uniform.
We obtain
\begin{align*}
	\up{(Q;\down{R})}
	&= \up{\left(\kbordermatrix{
	    & 1\\
	    1\!\!\! & a
        }; \kbordermatrix{
	    & d\\
	    1\!\!\! & d
        }\right)}\\
	&= \kbordermatrix{
	    & d\\
	    1\!\!\! & 0
        },\\
	\up{Q};\down{R}
	&= \kbordermatrix{
	    & 1\\
	    1\!\!\! & 1
        }; \kbordermatrix{
	    & d\\
	    1\!\!\! & d
        }\\
        &= \kbordermatrix{
	    & d\\
	    1\!\!\! & d
        },
\end{align*}
verifying that the equation $\up{(Q;\down{R})}=\up{Q};\down{R}$ is not valid. 

In Lemma~\ref{Lem:ArrowBasics}(6) we will show that Theorem~\ref{Th:RelHArrow}(3) is also valid if we replace the unit by any uniform object. Our last example will show that 
uniformity cannot be dropped, i.e., that $\up{Q}=\Top_{AA}$ for every ideal $Q:A\to A$ is not valid. We define
\[
Q=\kbordermatrix{
 & a & d\\
 a & a & 0\\
 d & 0 & 0 
},
\]
i.e., $Q:(A,\alpha')\to(A,\alpha')$ with $\alpha'(r)=a$ and $\alpha'(s)=d$. This relation is non-zero and an ideal since we have
\begin{align*}
	\lefteqn{\Top_{(A,\alpha')(A,\alpha')};Q;\Top_{(A,\alpha')(A,\alpha')}}\\
	&= \kbordermatrix{
	 & a & d\\
	 a & a & 0\\
	 d & 0 & d 
	};
	\kbordermatrix{
	 & a & d\\
	 a & a & 0\\
	 d & 0 & 0 
	};\kbordermatrix{
	 & a & d\\
	 a & a & 0\\
	 d & 0 & d 
	}\\
	&= \kbordermatrix{
	 & a & d\\
	 a & a & 0\\
	 d & 0 & 0 
	}\\
	&= Q.	
\end{align*}
On the other hand $\up{Q}=Q\ne\Top_{(A,\alpha')(A,\alpha')}$.

\section{Heyting allegories with Arrows}

We now take the properties of $\mathrm{Rel}({\mathcal H})$ listed in Theorem~\ref{Th:RelHArrow} and define Heyting allegories with arrows as a weakening of arrow categories.

\begin{definition}\label{Def:Arrow} 
Let $\kat{R}$ be a Heyting allegory with a unit and a pair of arrow operations. Then $\kat{R}$ is called a Heyting allegory with arrows iff the following conditions are satisfied:
\begin{enumerate}
	\item $\up{\id}_A=\id_A$ and $\up{\Top}_{AB}=\Top_{AB}$ for all objects $A$ and $B$.
	\item $\up{(Q;\down{R})}\sqcap\Top_{AB};\Top_{BC}=(\up{Q}\sqcap\Top_{AC};\Top_{CB});\down{R}$ 
    		for all relations $Q:A\to B$ and $R:B\to C$ and uniform objects $B$ and $C$.
    	\item If $Q\ne\Bot_{\unit\unit}$, then we have $\up{Q}=\Top_{\unit\unit}$.
\end{enumerate}
\end{definition}

Please note that the definition above differs from the definition of an arrow category. Arrow categories were introduced as an abstract theory for lattice valued relations
that are based on one fixed Heyting algebra. As a consequence, in an arrow category, all Heyting algebras $\ideals{A}{B}$ need to be isomorphic, which follows from 
uniformity as already mentioned above. The axiom $\Top_{AB}\neq\Bot_{AB}$ for all objects $A$ and $B$ of an arrow category avoids the existence of a null object. 
Intuitively, a null object corresponds to the empty set as an object. Since there are no elements the empty relation is the only relation on the empty set. This relation does 
not use any truth values (or exactly one truth value depending on the point of view), and, hence, may not use the same truth values than the relations on non-empty sets. 
Furthermore, the definition of an arrow category uses the axiom
$$ \up{(\trans{Q};\down{R})}=\trans{\up{Q}};\down{R} $$ 
for all $Q:B\to A$ and $R:B\to C$. This axiom can be separated into the two axioms
$$ \up{\trans{Q}}=\trans{\up{Q}}\mbox{ and } \up{(Q;\down{R})}=\up{Q};\down{R} $$ 
for all $Q:A\to B$ and $R:B\to C$. The latter version separates the relationship between the arrows and converse resp.\ composition into two axioms, which is more
convenient in our current context. The first axiom holds in all cases considered in this paper, and, is, therefore, included in the definition above. The second axiom has been 
replaced by the weaker version stated in (2).

By Theorem \ref{Th:RelHArrow} $\mathrm{Rel}({\mathcal H})$ is a Heyting allegory with arrows. Furthermore, $\mathrm{Rel}^u({\mathcal H})$ and ${\mathcal H}{\rm-Rel}$
are also Heyting algebra with arrows because both are full suballegories of $\mathrm{Rel}({\mathcal H})$ containing the unit by Lemma \ref{Lem:Unituniform} and the definition 
of a Heyting allegory with arrows is closed under suballegories due to the nature of the axioms. In addition, $\mathrm{Rel}^u({\mathcal H})$ is locally uniform by 
Lemma \ref{Lem:RelHobjectuniform} and ${\mathcal H}{\rm-Rel}$ is uniform because 
\begin{align*}
	(\Top_{(A,\overline{1})(B,\overline{1})};\Top_{(B,\overline{1})(C,\overline{1})})(a,c)
	&= \bigvee_{b\in B}\Top_{(A,\overline{1})(B,\overline{1})}(a,b)\wedge\Top_{(B,\overline{1})(C,\overline{1})}(b,c)\\
	&= \bigvee_{b\in B}1\wedge 1\\
	&= 1\\
	&= \Top_{(A,\overline{1})(C,\overline{1})}(a,c).
\end{align*}

Next we want to verify that the unit is always uniform.

\begin{lemma}\label{Lem:Unituniform}
Let $\kat{R}$ be a Heyting category with arrows. Then the unit $unit$ is uniform.
\end{lemma}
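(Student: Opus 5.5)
We need to show: for every non-zero $v:\unit\to\unit$, $\up{v};\Top_{\unit\unit}=\Top_{\unit\unit};\Top_{\unit\unit}$. Here $v$ is an arbitrary relation on the unit, so the unit axiom of a Heyting allegory with arrows, Definition~\ref{Def:Arrow}(3), applies directly.

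The plan is as follows. Let $v:\unit\to\unit$ with $v\neq\Bot_{\unit\unit}$. By Definition~\ref{Def:Arrow}(3) we get $\up{v}=\Top_{\unit\unit}$. Hence
$$\up{v};\Top_{\unit\unit}=\Top_{\unit\unit};\Top_{\unit\unit}.$$
This is literally the required equation, with $\Top_{\unit A};\Top_{A\unit}$ specialised to $A=\unit$. If one prefers, one can also simplify both sides using $\Top_{\unit\unit}=\id_\unit$ (the definition of a unit) or Lemma~\ref{Lem:Basics}(1), which shows both sides equal $\Top_{\unit\unit}$; this is not needed.

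There is no real obstacle. The only point to check is that the definition of a uniform object quantifies over non-zero $v:\unit\to A$. For $A=\unit$ this is exactly the hypothesis $Q\neq\Bot_{\unit\unit}$ of Definition~\ref{Def:Arrow}(3), so that axiom can be invoked as stated.
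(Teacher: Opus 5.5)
Your proof is correct and matches the paper's: both take a non-zero $v:\unit\to\unit$, use Definition~\ref{Def:Arrow}(3) to get $\up{v}=\Top_{\unit\unit}$, and then the uniformity equation for $A=\unit$ is immediate. The paper additionally simplifies both sides to $\Top_{\unit\unit}$ using $\Top_{\unit\unit}=\id_\unit$, which, as you note, is not needed.
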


\begin{proof}
Assume $v:\unit\to\unit$ is non-zero. Then by Definition \ref{Def:Arrow}(3) we have $\up{v}=\Top_{\unit\unit}$ and obtain
$ \up{v};\Top_{\unit\unit}= \Top_{\unit\unit};\Top_{\unit\unit}=\Top_{\unit\unit};\id_{\unit}=\Top_{\unit\unit}$. 
\end{proof}

The following lemma lists some basic properties of Heyting allegory with arrows.

\begin{lemma}\label{Lem:ArrowBasics}
Let $\kat{A}$ be a Heyting allegory with arrows and $Q,R:A\to B$. Then we have:
\begin{enumerate}
	\item $Q^{\downarrow\uparrow}=\down{Q}$ and $Q^{\uparrow\downarrow}=\up{Q}$.
	\item $\up{Q}=Q$ iff $\down{Q}=Q$.
	\item $\down{\rule{0pt}{6pt}}$ is an interior and $\up{\rule{0pt}{6pt}}$ a closure operation, i.e., we have
		$\down{Q}\sqsubseteq Q$, $Q^{\downarrow\downarrow}=\down{Q}$ and $Q\sqsubseteq\up{Q}$, $Q^{\uparrow\uparrow}=\up{Q}$.
	\item $\up{\Top}_{AB}=\down{\Top}_{AB}=\Top_{AB}$, $\up{\id}_A=\down{\id}_A=\id_A$, and $\up{\Bot}_{AB}=\down{\Bot}_{AB}=\Bot_{AB}$.
	\item $\up{(\down{Q};R)}\sqcap\Top_{AB};\Top_{BC}=\down{Q};(\up{R}\sqcap\Top_{BA};\Top_{AC})$ 
    		for all relations $Q:A\to B$ and $R:B\to C$ and uniform objects $A$ and $B$.
	\item If $Q:A\to B$ is a non-zero ideal and $A$ and $B$ are uniform, then $\up{Q}=\Top_{AB}$.
\end{enumerate}
\end{lemma}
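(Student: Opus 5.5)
For items (1)--(4) I would use only Definition~\ref{Def:PairArrows}, Lemma~\ref{Lem:PairArrowsBasics} and Definition~\ref{Def:Arrow}(1).

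For (1), instantiate Definition~\ref{Def:PairArrows}(3) with $Q:=\Top_{AB}$. Together with $\up{\Top}_{AB}=\Top_{AB}$ this gives
$$Q^{\downarrow\uparrow}=(\Top_{AB}\sqcap\down{Q})^{\uparrow}=\up{\Top}_{AB}\sqcap\down{Q}=\down{Q}.$$
Applying this identity to $\up{Q}$ and then Lemma~\ref{Lem:PairArrowsBasics}(3) yields $Q^{\uparrow\downarrow}=Q^{\uparrow\downarrow\uparrow}=\up{Q}$.

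For (2), if $\up{Q}=Q$ then $\down{Q}=Q^{\uparrow\downarrow}=\up{Q}=Q$. Symmetrically, if $\down{Q}=Q$ then $\up{Q}=Q^{\downarrow\uparrow}=\down{Q}=Q$.

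For (3), we have $\down{Q}=Q^{\downarrow\uparrow}\sqsubseteq Q$ and $Q\sqsubseteq Q^{\uparrow\downarrow}=\up{Q}$ by (1) and Lemma~\ref{Lem:PairArrowsBasics}(1). Idempotence follows from $Q^{\downarrow\downarrow}=Q^{\downarrow\uparrow\downarrow}=\down{Q}$ and $Q^{\uparrow\uparrow}=Q^{\uparrow\downarrow\uparrow}=\up{Q}$, using (1) and Lemma~\ref{Lem:PairArrowsBasics}(3).

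For (4), we know $\up{\Top}=\Top$ and $\up{\id}=\id$ by Definition~\ref{Def:Arrow}(1), and $\up{\Bot}=\Bot$ by Lemma~\ref{Lem:PairArrowsBasics}(5). In each case (2) transfers the identity from $\up{}$ to $\down{}$.

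For (5), apply Definition~\ref{Def:Arrow}(2) to $\trans{R}:C\to B$ and $\trans{Q}:B\to A$. The intermediate object is $B$ and the target is $A$, and both are uniform, as the axiom requires. This gives
$$\up{(\trans{R};\down{\trans{Q}})}\sqcap\Top_{CB};\Top_{BA}=(\up{\trans{R}}\sqcap\Top_{CA};\Top_{AB});\down{\trans{Q}}.$$
Now take the converse of both sides. Use Definition~\ref{Def:PairArrows}(4) and Lemma~\ref{Lem:PairArrowsBasics}(6) to move converse through the arrows, and use $\trans{\Top}=\Top$.

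Item (6) is the main step. My plan is to show separately that $\up{Q}$ is an ideal and that $\varphi(\up{Q})=\varphi(\Top_{AB})$. Then $\up{Q}=\psi(\varphi(\up{Q}))=\psi(\varphi(\Top_{AB}))=\Top_{AB}$ by the isomorphism theorem for ideals, or directly:
$$\up{Q}=\Top_{AA};\up{Q};\Top_{BB}=\Top_{A\unit};\varphi(\up{Q});\Top_{\unit B}=\Top_{A\unit};\varphi(\Top_{AB});\Top_{\unit B}=\Top_{AB}.$$

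\emph{Ideal.} Apply (5) with $\Top_{AA}$ in place of $Q$, which is allowed since $A$ is uniform, and use $\down{\Top}_{AA}=\Top_{AA}$. This gives
$$(\Top_{AA};Q)^{\uparrow}\sqcap\Top_{AA};\Top_{AB}=\Top_{AA};(\up{Q}\sqcap\Top_{AA};\Top_{AB}).$$
Since $\Top_{AA};Q=Q$ for an ideal and $\up{Q}\sqsubseteq\Top_{AB}$, this reduces to $\up{Q}=\Top_{AA};\up{Q}$. Symmetrically, Definition~\ref{Def:Arrow}(2) with $\Top_{BB}$ in place of $R$ (using that $B$ is uniform) gives $\up{Q}=\up{Q};\Top_{BB}$.

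\emph{Value under $\varphi$.} Apply (5) with objects $\unit$ and $A$; the unit is uniform by Lemma~\ref{Lem:Unituniform}. Taking $\Top_{\unit A}$ in place of $Q$ gives
$$(\Top_{\unit A};Q)^{\uparrow}\sqcap\Top_{\unit A};\Top_{AB}=\Top_{\unit A};\up{Q}.$$
Set $v:=\Top_{\unit A};Q$. Then $v\neq\Bot$, since otherwise $\varphi(Q)=\Bot$ and hence $Q=\psi(\varphi(Q))=\Bot$. Uniformity of $B$ therefore gives $\up{v};\Top_{B\unit}=\Top_{\unit B};\Top_{B\unit}$.

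Next compose the displayed equation with $\Top_{B\unit}$ on the right. Write $\Top_{\unit A};\Top_{AB}=(\Top_{\unit A};\Top_{A\unit});\Top_{\unit B}$ using Lemma~\ref{Lem:TopUnit}, and distribute via Lemma~\ref{Lem:Basics}(2). This yields
$$\varphi(\up{Q})=\Top_{\unit B};\Top_{B\unit}\sqcap\Top_{\unit A};\Top_{A\unit}.$$
By Lemma~\ref{Lem:Basics}(3), with $\Top_{\unit\unit}=\id_\unit$, the right-hand side equals $\Top_{\unit A};\Top_{A\unit};\Top_{\unit B};\Top_{B\unit}=\varphi(\Top_{AB})$.

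I expect this last bookkeeping, getting the meet and composition with $\Top_{B\unit}$ to line up exactly with Lemma~\ref{Lem:Basics}(2) and (3), to be the most delicate part.
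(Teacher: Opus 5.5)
Your proposal is correct. Items (1)--(5) match the paper's proof, including the transposition argument for (5) with $B$ and $A$ as the uniform objects. Only (6) takes a genuinely different route.

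The paper proves (6) by one equational chain. It writes $\up{Q}=\up{(\Top_{AA};Q;\Top_{B\unit};\down{\Top}_{\unit B})}$. It then pulls $\down{\Top}_{\unit B}$ out on the right with Definition~\ref{Def:Arrow}(2) ($\unit$, $B$ uniform) and $\down{\Top}_{A\unit}$ out on the left with (5) ($A$, $\unit$ uniform). This reduces everything to $\Top_{A\unit};\up{(\Top_{\unit A};Q;\Top_{B\unit})};\Top_{\unit B}$. Finally it applies Definition~\ref{Def:Arrow}(3) to the non-zero relation $\varphi(Q)$ on the unit.

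You split the task instead. First you show that $\up{Q}$ is again an ideal, via (5) with $A,A$ and Definition~\ref{Def:Arrow}(2) with $B,B$. Then you compute $\varphi(\up{Q})$ using (5) with $\unit,A$ and the defining condition of the uniform object $B$ applied to $v=\Top_{\unit A};Q$. Lastly you recover $\up{Q}$ from $\varphi(\up{Q})$ via Lemma~\ref{Lem:TopUnit}, which is exactly where the ideal step is needed.

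The bookkeeping you flagged goes through. Lemma~\ref{Lem:Basics}(2) with $R:=\Top_{\unit A};\Top_{A\unit}$ and $S:=\Top_{B\unit}$ yields exactly $\up{v};\Top_{B\unit}\sqcap\Top_{\unit A};\Top_{A\unit}$. Since all relations on $\unit$ are ideals, Lemma~\ref{Lem:IdealProps} turns this meet into the composite $\Top_{\unit A};\Top_{A\unit};\Top_{\unit B};\Top_{B\unit}=\varphi(\Top_{AB})$.

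The two approaches trade off as follows:
\begin{itemize}
\item The paper's chain is self-contained and symmetric in $A$ and $B$. It uses each uniformity hypothesis once, only as a side condition of an axiom, and gets the top value from Definition~\ref{Def:Arrow}(3).
\item Your version uses each hypothesis twice and needs the extra ideal-preservation step. In exchange, it uses the definition of a uniform object directly, in place of a direct appeal to Definition~\ref{Def:Arrow}(3). It also makes the mechanism explicit: an ideal is determined by its image in $\ideals{\unit}{\unit}$, so it suffices to show that $\up{\rule{0pt}{6pt}}$ pushes that image up to $\varphi(\Top_{AB})$.
\end{itemize}
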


\begin{proof}
\begin{enumerate}
	\item The first equation is shown by
		\begin{align*}
			Q^{\downarrow\uparrow}
			&= \up{(\Top_{AB}\sqcap\down{Q})}\\
			&= \up{\Top}_{AB}\sqcap\down{Q} && \mbox{Definition \ref{Def:PairArrows}(3)}\\
			&= \Top_{AB}\sqcap\down{Q} && \mbox{Definition \ref{Def:Arrow}(1)}\\
			&= \down{Q}.
		\end{align*}
		Using the first equation we obtain
		\begin{align*}
			Q^{\uparrow\downarrow}
			&= Q^{\uparrow\downarrow\uparrow} && \mbox{first equation}\\
			&= \up{Q}. && \mbox{Lemma \ref{Lem:PairArrowsBasics}(3)}
		\end{align*}
	\item Suppose $\up{Q}=Q$. Then by using (1)  we obtain $\down{Q}=Q^{\uparrow\downarrow}=\up{Q}=Q$.
		Similarly, if $\down{Q}=Q$, we get $\up{Q}=Q^{\downarrow\uparrow}=\down{Q}=Q$.
	\item We have $\down{Q}=Q^{\downarrow\uparrow}\sqsubseteq Q$ by using (1) and Lemma \ref{Lem:PairArrowsBasics}(1). From Lemma \ref{Lem:PairArrowsBasics}(3) and (1) we obtain 
		$\down{Q}=Q^{\downarrow\uparrow\downarrow}=Q^{\downarrow\downarrow}$. The properties for $\up{\rule{0pt}{6pt}}$
		are shown analogously.
    	\item This follows from (2) and Definition \ref{Def:Arrow}(1) and Lemma \ref{Lem:PairArrowsBasics}(5).
    	\item We immediately compute
    		\begin{align*}
    			\lefteqn{\up{(\down{Q};R)}\sqcap\Top_{AB};\Top_{BC}}\\
    			&= \trans{(\trans{\up{(\down{Q};R)}}\sqcap\Top_{CB};\Top_{BA})}\\
    			&= \trans{(\up{(\trans{R};\down{\trans{Q}})}\sqcap\Top_{CB};\Top_{BA})} && \mbox{Definition \ref{Def:PairArrows}(4) and (10)}\\
    			&= \trans{((\up{\trans{R}}\sqcap\Top_{CA};\Top_{AB});\down{\trans{Q}})} && \mbox{Definition \ref{Def:Arrow}(2) with $A, B$ uniform}\\
    			&= \trans{((\trans{\up{R}}\sqcap\Top_{CA};\Top_{AB});\trans{\down{Q}})} && \mbox{Definition \ref{Def:PairArrows}(4) and (10)}\\
    			&= \down{Q};(\up{R}\sqcap\Top_{BA};\Top_{AC}).
    		\end{align*}
	\item Suppose $Q\ne\Bot_{AB}$. First we want to show that $\Top_{\unit A};Q;\Top_{B\unit}$ is non-zero. Therefore, assume $\Top_{\unit A};Q;\Top_{B\unit}=\Bot_{\unit\unit}$.
		Then we obtain
		\begin{align*}
			Q
			&= \Top_{AA};Q;\Top_{BB} && \mbox{$Q$ ideal}\\
			&= \Top_{A\unit};\Top_{\unit A};Q;\Top_{B\unit};\Top_{\unit B} && \mbox{Lemma \ref{Lem:TopUnit}}\\	
			&= \Top_{A\unit};\Bot_{\unit\unit};\Top_{\unit B} && \mbox{assumption}\\	
			&= \Bot_{AB},
		\end{align*}
		a contradiction. Therefore, $\Top_{\unit A};Q;\Top_{B\unit}\ne\Bot_{\unit\unit}$. Now, the following computation
		\begin{align*}
			\up{Q}
			&= \up{(\Top_{AA};Q;\Top_{BB})} && \mbox{$Q$ ideal}\\
			&= \up{(\Top_{AA};Q;\Top_{B\unit};\Top_{\unit B})} && \mbox{Lemma \ref{Lem:TopUnit}}\\			
			&= \up{(\Top_{AA};Q;\Top_{B\unit};\down{\Top}_{\unit B})} && \mbox{by (4)}\\			
			&= \up{(\Top_{AA};Q;\Top_{B\unit};\down{\Top}_{\unit B})}\sqcap\Top_{A\unit};\Top_{\unit B} && \mbox{Lemma \ref{Lem:TopUnit}}\\
			&= \up{(\Top_{AA};Q;\Top_{B\unit}\sqcap\Top_{AB};\Top_{B\unit})};\down{\Top}_{\unit B} && \mbox{Definition \ref{Def:Arrow}(2), $\unit, B$ uniform}\\
			&= \up{((\Top_{AA};Q;\Top_{BB}\sqcap\Top_{AB});\Top_{B\unit})};\down{\Top}_{\unit B} && \mbox{Lemma \ref{Lem:Basics}(3)}\\
			&= \up{(\Top_{AA};Q;\Top_{BB};\Top_{B\unit})};\Top_{\unit B} && \mbox{by (4)}\\
			&= \up{(\Top_{AA};Q;\Top_{B\unit})};\Top_{\unit B} && \mbox{Lemma \ref{Lem:Basics}(1)}\\			
			&= \up{(\Top_{A\unit};\Top_{\unit A};Q;\Top_{B\unit})};\Top_{\unit B} && \mbox{Lemma \ref{Lem:TopUnit}}\\			
			&= \up{(\down{\Top}_{A\unit};\Top_{\unit A};Q;\Top_{B\unit})};\Top_{\unit B} && \mbox{by (4)}\\
			&= (\up{(\down{\Top}_{A\unit};\Top_{\unit A};Q;\Top_{B\unit})}\sqcap\Top_{A\unit};\Top_{\unit\unit});\Top_{\unit B} && \mbox{Lemma \ref{Lem:Basics}(1)}\\
			&= \down{\Top}_{A\unit};\up{(\Top_{\unit A};Q;\Top_{B\unit}\sqcap\Top_{\unit A};\Top_{A\unit})};\Top_{\unit B} && \mbox{by (11) with $\unit, A$ uniform}\\
			&= \down{\Top}_{A\unit};\up{(\Top_{\unit A};(Q;\Top_{B\unit}\sqcap\Top_{AA};\Top_{A\unit}))};\Top_{\unit B} && \mbox{Lemma \ref{Lem:Basics}(3)}\\
			&= \Top_{A\unit};\up{(\Top_{\unit A};Q;\Top_{B\unit})};\Top_{\unit B} && \mbox{by (4) and Lemma \ref{Lem:Basics}(1)}\\
			&= \Top_{A\unit};\Top_{\unit\unit};\Top_{\unit B} && \mbox{Definition \ref{Def:Arrow}(3)}\\
			&= \Top_{AB} && \mbox{$\Top_{\unit\unit}=\id_\unit$ and Lemma \ref{Lem:TopUnit}}
		\end{align*}
		that verifies the assertion.
\end{enumerate}
\end{proof}

The following theorem characterizes $\mathrm{Rel}^u({\mathcal H})$ abstractly.

\begin{theorem}\label{Th:LUArrow}
Let $\kat{R}$ be a Heyting allegory with a unit and a pair of arrow operations. Then the following statements are equivalent:
\begin{enumerate}
	\item $\kat{R}$ is a locally uniform Heyting allegory with arrows.
	\item The following properties are satisfied:
		\begin{enumerate}
			\item $\up{\id}_A=\id_A$ for all objects $A$.
			\item $\up{(Q;\down{R})}\sqcap\Top_{AB};\Top_{BC}=(\up{Q}\sqcap\Top_{AC};\Top_{CB});\down{R}$ for all relations $Q:A\to B$ and $R:B\to C$.
    			\item If $Q\ne\Bot_{AB}$ is an ideal, then we have $\up{Q}=\Top_{AB}$.
		\end{enumerate}
\end{enumerate}
\end{theorem}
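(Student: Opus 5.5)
The plan is to prove the two implications separately. Most of the work lies in showing that condition (2)(c), for ideals, forces every object to be uniform.

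\emph{(1) implies (2).} Assume $\kat{R}$ is a locally uniform Heyting allegory with arrows. Then (a) is the first half of Definition~\ref{Def:Arrow}(1). Property (b) is Definition~\ref{Def:Arrow}(2), whose uniformity hypotheses on $B$ and $C$ now hold automatically. Property (c) is Lemma~\ref{Lem:ArrowBasics}(6), since $A$ and $B$ are uniform.

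\emph{(2) implies (1).} Assume (a)--(c). First we recover Definition~\ref{Def:Arrow}:
\begin{itemize}
\item For the second half of (1), $\Top_{AB}$ is an ideal by Lemma~\ref{Lem:IdealProps}. If $\Top_{AB}\ne\Bot_{AB}$, (c) gives $\up{\Top}_{AB}=\Top_{AB}$. If $\Top_{AB}=\Bot_{AB}$, then $\up{\Top}_{AB}=\up{\Bot}_{AB}=\Bot_{AB}$ by Lemma~\ref{Lem:PairArrowsBasics}(5).
\item Definition~\ref{Def:Arrow}(3) is the special case $A=B=\unit$ of (c), because every relation on $\unit$ is an ideal and $\Top_{\unit\unit}$ is the top.
\item Definition~\ref{Def:Arrow}(2) is (b) with the uniformity hypotheses dropped.
\end{itemize}
So $\kat{R}$ is a Heyting allegory with arrows. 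Consequently Lemma~\ref{Lem:ArrowBasics}(1)--(5), which use only these axioms, are available.

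It remains to show that every object $A$ is uniform. Let $v:\unit\to A$ be non-zero; we must show $\up{v};\Top_{A\unit}=\Top_{\unit A};\Top_{A\unit}$. Note that $\down{\Top}_{A\unit}=\Top_{A\unit}$ by Lemma~\ref{Lem:ArrowBasics}(4). Apply (b) with $Q:=v$ and $R:=\Top_{A\unit}$:
$$\up{(v;\Top_{A\unit})}\sqcap\Top_{\unit A};\Top_{A\unit}=(\up{v}\sqcap\Top_{\unit\unit};\Top_{\unit A});\Top_{A\unit}=\up{v};\Top_{A\unit},$$
where the last step uses $\Top_{\unit\unit}=\id_\unit$.

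Now $v;\Top_{A\unit}:\unit\to\unit$ is an ideal, and it is non-zero. To see this, suppose $v;\Top_{A\unit}=\Bot$. Then $v\sqsubseteq v;\Top_{A\unit};\Top_{\unit A}$, because $\Top_{AA}=\Top_{A\unit};\Top_{\unit A}$ by Lemma~\ref{Lem:TopUnit} and $v\sqsubseteq v;\Top_{AA}$. This would force $v=\Bot$, a contradiction. Hence (c) gives $\up{(v;\Top_{A\unit})}=\Top_{\unit\unit}$. The left-hand side above is therefore $\Top_{\unit A};\Top_{A\unit}$, which is exactly uniformity of $A$.

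The step I expect to need the most care is checking that every property used here follows from (a)--(c) alone, without circularity. In particular, the Galois-based identity $\down{\Top}=\Top$ must come from the second half of Definition~\ref{Def:Arrow}(1), which was derived first. If (b) turned out to be awkward with $R=\Top_{A\unit}$, the fallback is the symmetric form Lemma~\ref{Lem:ArrowBasics}(5), applied to the converse $\trans{v}$.
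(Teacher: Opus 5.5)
Your proposal is correct and follows the paper's proof essentially step by step. The steps that match are the same case split ($\Top_{AB}=\Bot_{AB}$ or not) to get $\up{\Top}_{AB}=\Top_{AB}$, the same argument that $v;\Top_{A\unit}$ is a non-zero ideal, and the same instance of (b) with $Q:=v$ and $R:=\Top_{A\unit}$. Your circularity worry is unnecessary: $\down{\Top}_{A\unit}=\Top_{A\unit}$ holds for any pair of arrow operations, since $\Top\sqsubseteq\down{\Top}$ iff $\up{\Top}\sqsubseteq\Top$.
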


\begin{proof}
\begin{description}
	\item[$(1)\Rightarrow(2)${\rm :}] Property (a) is part of Definition \ref{Def:Arrow}(1). Property (b) follows immediately from Definition \ref{Def:Arrow}(2) because all objects
		in $\kat{R}$ are uniform. Finally, Property (c) is shown by Lemma \ref{Lem:ArrowBasics}(6), again, because all objects in $\kat{R}$ are uniform.
	\item[$(2)\Rightarrow(1)${\rm :}] In order to show that $\kat{R}$ is a Heyting allegory with arrows it is sufficient to show that $\up{\Top}_{AB}=\Top_{AB}$ since all other properties
		are just special cases of (a)-(c). If $\Top_{AB}=\Bot_{AB}$, then we have $\up{\Top}_{AB}=\up{\Bot}_{AB}=\Bot_{AB}=\Top_{AB}$ by using Lemma \ref{Lem:PairArrowsBasics}(5). 
		If $\Top_{AB}\ne\Bot_{AB}$, then the assertion follows immediately from (c). It remains to show that $\kat{R}$ is locally uniform. Suppose $v:\unit\to A$ is non-zero. 
		First we want to show that $v;\Top_{A\unit}$ is a non-zero ideal. It is an ideal because every relation on the $\unit$ is an ideal. Assume $v;\Top_{A\unit}=\Bot_{\unit\unit}$. Then
		we get
		\begin{align*}
			v
			&\sqsubseteq v;\Top_{AA}\\
			&= v;\Top_{A\unit};\Top_{\unit A} && \mbox{Lemma \ref{Lem:TopUnit}}\\
			&= \Bot_{\unit\unit};\Top_{\unit A} && \mbox{assumption}\\
			&= \Bot_{\unit\unit},
		\end{align*}	
		a contradiction. Therefore, $v;\Top_{A\unit}\ne\Bot_{\unit\unit}$ and (c) implies that $\up{(v;\Top_{A\unit})}=\Top_{\unit\unit}$. Now, compute
		\begin{align*}
			\up{v};\Top_{A\unit}
			&= \up{(v\sqcap\Top_{\unit\unit};\Top_{\unit A})};\down{\Top}_{A\unit} && \mbox{Lemma \ref{Lem:ArrowBasics}(4) and $\Top_{\unit\unit}=\id_\unit$}\\
			&= \up{(v;\down{\Top}_{A\unit})}\sqcap\Top_{\unit A};\Top_{A\unit} && \mbox{by (b)}\\
			&= \up{(v;\Top_{A\unit})}\sqcap\Top_{\unit A};\Top_{A\unit} && \mbox{Lemma \ref{Lem:ArrowBasics}(4)}\\
			&= \Top_{\unit\unit}\sqcap\Top_{\unit A};\Top_{A\unit} && \mbox{see above}\\
			&= \Top_{\unit A};\Top_{A\unit},
		\end{align*}
		verifying that $A$ is uniform.
\end{description}
\end{proof}

The following lemma relates locally uniform Heyting allegory with arrows with arrow categories. Please note that the property $\Top_{AB}\ne\Bot_{AB}$ is part of the definition
of an arrow allegory and, therefore, needs to be added to the following theorem. The reason for including this property is convenience, i.e., avoiding the corresponding
case distinction in each proposition.

\begin{theorem}
Let $\kat{R}$ be a locally uniform Heyting allegory with arrows so that $\Top_{AB}\ne\Bot_{AB}$ for every $A$ and $B$. Then the following statements are equivalent:
\begin{enumerate}
	\item $\kat{R}$ is a uniform.
	\item $\up{(Q;\down{R})}=\up{Q};\down{R}$ for all relations $Q:A\to B$ and $R:B\to C$.
\end{enumerate}
\end{theorem}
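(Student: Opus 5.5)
For $(1)\Rightarrow(2)$, uniformity gives $\Top_{AB};\Top_{BC}=\Top_{AC}$ and $\Top_{AC};\Top_{CB}=\Top_{AB}$. Since $\kat{R}$ is locally uniform, Theorem~\ref{Th:LUArrow}(2)(b) applies to all $Q:A\to B$ and $R:B\to C$. The left-hand side becomes $\up{(Q;\down{R})}\sqcap\Top_{AC}=\up{(Q;\down{R})}$. The right-hand side becomes $(\up{Q}\sqcap\Top_{AB});\down{R}=\up{Q};\down{R}$. This is exactly (2). The hypothesis $\Top_{AB}\ne\Bot_{AB}$ is not needed in this direction.

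For $(2)\Rightarrow(1)$, Lemma~\ref{Lem:uniform} reduces the claim to showing $\Top_{\unit A};\Top_{A\unit}=\Top_{\unit\unit}$ for every object $A$. The inclusion $\sqsubseteq$ is trivial, so I need $\Top_{\unit\unit}\sqsubseteq\Top_{\unit A};\Top_{A\unit}$. I would instantiate (2) with $Q:=\Top_{\unit\unit}=\id_\unit$ and $R:=\Top_{\unit A}$. By Lemma~\ref{Lem:ArrowBasics}(4) we have $\down{\Top}_{\unit A}=\Top_{\unit A}$, so (2) gives
$$\up{\Top}_{\unit A}=\up{(\id_\unit;\down{\Top}_{\unit A})}=\up{\id}_\unit;\down{\Top}_{\unit A}=\Top_{\unit A}.$$
This instance is tautological and gives nothing. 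A better choice is a composite through $A$ that collapses on the unit. I would take $Q:=\Top_{\unit A}$ and $R:=\Top_{A\unit}$. Then (2) and Lemma~\ref{Lem:ArrowBasics}(4) give
$$\up{(\Top_{\unit A};\Top_{A\unit})}=\Top_{\unit A};\Top_{A\unit}.$$
Now $\Top_{\unit A};\Top_{A\unit}$ is a relation on the unit, and it is non-zero. Indeed, if it were $\Bot_{\unit\unit}$, then Lemma~\ref{Lem:TopUnit} would give $\Top_{AA}=\Top_{A\unit};\Top_{\unit A};\Top_{A\unit};\Top_{\unit A}=\Bot_{AA}$, contradicting the assumption $\Top_{AA}\ne\Bot_{AA}$. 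Definition~\ref{Def:Arrow}(3) then yields $\up{(\Top_{\unit A};\Top_{A\unit})}=\Top_{\unit\unit}$. Combining the two displayed equations gives $\Top_{\unit A};\Top_{A\unit}=\Top_{\unit\unit}$, so $\kat{R}$ is uniform by Lemma~\ref{Lem:uniform}.

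The main obstacle is choosing the right instance of (2). The naive one reduces to the trivial identity $\up{\Top}=\Top$. The argument only works once the composite $\Top_{\unit A};\Top_{A\unit}$ lands in $\kat{R}[\unit,\unit]$, where axiom (3) forces $\up{\rule{0pt}{6pt}}$ to be $\Top_{\unit\unit}$ on non-zero elements. The non-emptiness assumption $\Top_{AB}\ne\Bot_{AB}$ is used exactly to rule out the zero case; without it, a null object $A$ would give $\Top_{\unit A};\Top_{A\unit}=\Bot_{\unit\unit}$ and break uniformity. In this direction local uniformity is not needed beyond the standing assumptions.
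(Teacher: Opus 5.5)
Your proof is correct. Direction $(1)\Rightarrow(2)$ is exactly the paper's argument: specialize Theorem~\ref{Th:LUArrow}(2)(b) and collapse the $\Top;\Top$ terms using uniformity.

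For $(2)\Rightarrow(1)$ you follow the paper's skeleton, but you make a genuine simplification. The common part is:
\begin{itemize}
\item instantiate (2) with universal relations;
\item use $\up{\Top}=\down{\Top}=\Top$ to see that the composite is $\up{}$-closed;
\item show the composite is non-zero, so it must be universal.
\end{itemize}
The paper does this for an arbitrary pair $A,B$. It takes $Q:=\Top_{AB}$ and $R:=\Top_{BA}$, and gets $\Top_{AB};\Top_{BA}\neq\Bot_{AA}$ from Lemma~\ref{Lem:Basics}(4) and $\Top_{AB}\neq\Bot_{AB}$. It then applies Theorem~\ref{Th:LUArrow}(2)(c) to the non-zero ideal $\Top_{AB};\Top_{BA}$ and concludes with condition (2) of Lemma~\ref{Lem:uniform}.

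You instead take $B=\unit$ and target condition (3) of Lemma~\ref{Lem:uniform}. Because your composite $\Top_{\unit A};\Top_{A\unit}$ lies in $\kat{R}[\unit,\unit]$, Definition~\ref{Def:Arrow}(3) applies directly. You therefore never need Theorem~\ref{Th:LUArrow}(2)(c), whose proof goes through Lemma~\ref{Lem:ArrowBasics}(6) and so depends on local uniformity. The paper's own counterexample (the non-zero ideal on $(A,\alpha')$ with $\up{Q}\neq\Top$) shows that this dependence is real in general.

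Your route thus proves slightly more. The implication $(2)\Rightarrow(1)$ holds in any Heyting allegory with arrows in which $\Top_{AA}\neq\Bot_{AA}$ for all $A$; local uniformity is needed only for $(1)\Rightarrow(2)$, as you say.

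Two minor points of bookkeeping:
\begin{itemize}
\item The identity $\Top_{AA}=\Top_{A\unit};\Top_{\unit A};\Top_{A\unit};\Top_{\unit A}$ uses Lemma~\ref{Lem:Basics}(1) in addition to Lemma~\ref{Lem:TopUnit}.
\item The discarded first instance with $Q=\id_\unit$ can simply be removed from a final write-up.
\end{itemize}
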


\begin{proof}
\begin{description}
	\item[$(1)\Rightarrow(2)${\rm :}] This follows immediately since Theorem \ref{Th:LUArrow}(2b) implies (2) if $\kat{R}$ is uniform.
	\item[$(2)\Rightarrow(1)${\rm :}] First we want to show that $\up{(\Top_{AB};\Top_{BA})}=\Top_{AA}$. Since 
		$\Top_{AB};\Top_{BA}$ is an ideal this follows immediately from $\Top_{AB};\Top_{BA}\neq\Bot_{AA}$ using Lemma \ref{Th:LUArrow}(2c).
		To verify the latter condition, suppose $\Top_{AB};\Top_{BA}=\Bot_{AA}$. Then using Lemma \ref{Lem:Basics}(4) we obtain 
		$\Top_{AB}\sqsubseteq\Top_{AB};\Top_{BA};\Top_{AB}=\Bot_{AA};\Top_{AB}=\Bot_{AB}$, a contradiction to the assumption
		of this lemma. Now we compute
		\begin{align*}
			\Top_{AB};\Top_{BA}
			&= \up{\Top}_{AB};\down{\Top}_{BA}  && \mbox{Lemma \ref{Lem:ArrowBasics}(4)}\\
			&= \up{(\Top_{AB};\down{\Top}_{BA})} && \mbox{by (2)}\\
			&= \up{(\Top_{AB};\Top_{BA})}  && \mbox{Lemma \ref{Lem:ArrowBasics}(4)}\\
			&= \Top_{AA}. && \mbox{see above}
		\end{align*}
	By Lemma \ref{Lem:uniform} it follows that $\kat{A}$ is uniform.
\end{description}
\end{proof}

\section{Conclusion and Future Work}

In this paper we generalized the theory of arrow allegories to the cases where relations between different objects may use different Heyting algebras as  truth values. This led
to the notion of locally uniform Heyting allegories with arrows. We also provided a suitable set of axioms (Theorem~\ref{Th:LUArrow}) for this class of allegories. Furthermore,
we generalized the concept even further to Heyting allegories with arrows which allow relations to use a different lattice of truth values for every pair in the relation. 

Certain applications of fuzzy relations require switching form one lattice of truth values to another. Replacing the truth values used by a different lattice is sometimes called a change of base.
Concrete examples utilizing a change of base include higher-order fuzzy controllers or, more generally, complex decision making and network analysis. In terms of arrow allegories a change of base
requires suitable operations between two (or more) different arrow allegories This has been studied in \cite{Winter2014, Winter2021}. Using the framework of Heyting allegories with arrows 
in this context will have two advantages. It is possible to characterize objects that are based on the same set of elements but use a different lattice of truth values abstractly. This together 
with an appropriate definition of the operations mentioned above should allow to define and apply a change of base within a single allegory. i.e., within a framework based on a single 
categorical/algebraic structure. We will investigate these topics in future work.

\end{document}